\def\frak{\mathfrak}
\def\Bbb{\mathbb}
\def\Cal{\mathcal}
\let\phi\varphi
\newcommand{\x}{\times}
\newcommand{\al}{\alpha}
\newcommand{\be}{\beta}
\newcommand{\ga}{\gamma}
\newcommand{\la}{\lambda}
\newcommand{\om}{\omega}
\newcommand{\ph}{\phi}
\newcommand{\ps}{\psi}
\renewcommand{\th}{\theta}
\newcommand{\Ga}{\Gamma}
\newcommand{\La}{\Lambda}
\newcommand{\Ph}{\Phi}
\newcommand{\Om}{\Omega}
\newcommand{\im}{\operatorname{im}}
\renewcommand{\Im}{\operatorname{im}}
\newcommand{\id}{\operatorname{id}}
\newcommand{\ad}{\operatorname{ad}}
\newcommand{\tr}{\operatorname{tr}}
\newcounter{theorem}
\numberwithin{theorem}{section}
\newtheorem{thm}[theorem]{Theorem}
\newtheorem*{thm*}{Theorem \thesubsection}
\newtheorem{lemma}[theorem]{Lemma}
\newtheorem{prop}[theorem]{Proposition}
\newtheorem{cor}[theorem]{Corollary}
\newtheorem*{lemma*}{Lemma \thesubsection}
\newtheorem*{prop*}{Proposition \thesubsection}
\newtheorem*{cor*}{Corollary \thesubsection}
\theoremstyle{definition}
\newtheorem{definition}[theorem]{Definition}
\newtheorem*{definition*}{Definition \thesubsection}
\newtheorem{example}[theorem]{Example}
\newtheorem*{example*}{Example \thesubsection}
\theoremstyle{remark}
\newtheorem{remark}[theorem]{Remark}
\newtheorem*{remark*}{Remark \thesubsection}
\def\sideremark#1{\ifvmode\leavevmode\fi\vadjust{\vbox to0pt{\vss% the remark
 \hbox to 0pt{\hskip\hsize\hskip1em%                          will appear only
 \vbox{\hsize3cm\tiny\raggedright\pretolerance10000%          on the side
  \noindent #1\hfill}\hss}\vbox to8pt{\vfil}\vss}}}%
\begin{document}

\title{Parabolic conformally symplectic structures I;\\ 
definition and distinguished connections} 
\date{September 27, 2017}
\author{Andreas \v Cap and Tom\'a\v s Sala\v c}
\thanks{Support by projects P23244--N13 (both authors) and P27072--N25
  (first author) of the Austrian Science fund (FWF) is gratefully
  acknowledged.} 

\address{A.\v C.: Faculty of Mathematics\\
University of Vienna\\
Oskar--Morgenstern--Platz 1\\
1090 Wien\\
Austria}
\address{T.S.: Mathematical Institute\\ Charles University\\ Sokolovsk\'a
  83\\Praha\\Czech Republic}
\email{Andreas.Cap@univie.ac.at}
\email{salac@karlin.mff.cuni.cz}

\begin{abstract}
  We introduce a class of first order G--structures, each of which has
  an underlying almost conformally symplectic structure. There is one
  such structure for each real simple Lie algebra which is not of type
  $C_n$ and admits a contact grading. We show that a structure of each
  of these types on a smooth manifold $M$ determines a canonical
  compatible linear connection on the tangent bundle $TM$. This
  connection is characterized by a normalization condition on its
  torsion. The algebraic background for this result is proved using
  Kostant's theorem on Lie algebra cohomology.

  For each type, we give an explicit description of both the geometric
  structure and the normalization condition. In
  particular, the torsion of the canonical connection naturally splits
  into two components, one of which is exactly the obstruction to the
  underlying structure being conformally symplectic. This article is
  the first in a series aiming at a construction of differential
  complexes naturally associated to these geometric structures.
\end{abstract}

\subjclass[2010]{Primary: 53D15, 53B15, 53C10, 53C15; 
Secondary: 53C29, 53C55}

\keywords{almost conformally symplectic structure, special symplectic
  connection, exceptional symplectic holonomy, finite order geometric
  structure, first prolongation, canonical connection}

\maketitle

\pagestyle{myheadings} \markboth{\v Cap and Sala\v c}{PCS--structures
  I}

\section{Introduction}\label{1}
This article is the first part in a series of three. The main
motivation for this series originally came from the work \cite{E-G} of
M.G.~Eastwood and H.~Goldschmidt on integral geometry. The main part
of that article is devoted to the construction of a family of
differential complexes on complex projective space, and to proving
some results on their cohomology, which then imply results on integral
geometry. The form of these complexes is rather unusual and the
construction in \cite{E-G} does not explain whether these complexes
are associated to a geometric structure on $\Bbb CP^n$ and, if yes,
what this structure actually is.

An attempt to sort out these questions was made in the first version
of the preprint \cite{E-S} by M.G.~Eastwood and J.~Slov\'ak. There the
authors define so--called conformally Fedosov structures and associate
a tractor bundle to such a structure. In the second version of the
preprint, which has appeared very recently, this tractor bundle was
used to construct differential complexes of the kind used in
\cite{E-G}. This construction starts from twisted de--Rham complexes
associated to tractor bundles and is similar to the machinery of
BGG--sequences as introduced in \cite{CSS-BGG} and \cite{CD}.

The tractor bundle associated to a conformally Fedosov structure in
\cite{E-S} actually looks similar to the standard tractor bundle of a
contact projective structure in one more dimension. This leads to the
idea of constructing sequences like the ones from \cite{E-G} via
descending usual BGG sequences from a contact projective structure in
one higher dimension (which also is in accordance with the length of
the complexes that can be traced from the construction in
\cite{E-G}). Now contact projective structures fit into the class of
so--called \textit{parabolic contact structures}. These are finite
order geometric structures (indeed, parabolic geometries), which carry
an underlying contact structure. The available types of parabolic
contact structures correspond to simple Lie algebras which admit a
so--called contact grading (which is the case for almost all
non--compact real simple Lie algebras). Contact projective structures
in this picture correspond to Lie algebras of type $C_n$ and are
slightly exceptional, see Section 4.2 of \cite{book}.

In the series of articles starting with this one, we carry out the
idea of descending BGG complexes to appropriate quotients. This is not
only done for contact projective structures but for all parabolic
contact structures. It turns out that the quotients in question can be
characterized by the fact that they carry certain geometric
structures. The current article is devoted to the study of the basic
properties of these geometric structures, independently of any
realization as a quotient. The exceptional behavior of the $C_n$--type
Lie algebras mentioned above also shows up in this setting. Hence we
exclude them from the discussion this part of the series. We will
discuss conformally Fedosov structures in the framework of
contactification in the second part \cite{PCS2} of the series.

To any contact grading on a real simple Lie algebra, which is not
of type $C_n$, we associate a first order $G$--structure, which has an
underlying almost conformally symplectic structure. We call the
resulting structures \textit{parabolic almost conformally symplectic
  structures} or \textit{PACS--structures}. If the underlying
structure is conformally symplectic, then the structure is called a
\textit{PCS--structure}. It should be mentioned here that we do not
use the classical definition of an (almost) conformally symplectic
structure via a representative two--form. Rather than that, we proceed
similarly to conformal geometry and view the structures as line
subbundles in the bundle of two forms, which leads to several
simplifications.

The main result of the article is Corollary \ref{cor4.4}, which states
that any PACS--structure on a smooth manifold $M$ determines a unique
linear connection on $TM$ whose torsion satisfies a suitable
normalization condition. Moreover, the torsion of this canonical
connection naturally splits into two components. One of these is a
complete obstruction against the underlying almost conformally
symplectic structure being conformally symplectic, while the other is
an obstruction against integrability of the additional structure.

This main result is proved by showing that the Lie algebra of the
structure group of any PACS--structure has vanishing first
prolongation (in the standard sense of Sternberg, see
\cite{Sternberg}). This result is proved via Kostant's theorem (see
\cite{Kostant}) on Lie algebra cohomology, which also leads to an
appropriate normalization condition on the torsion. We also use
Kostant's theorem to show that, except in the $A_n$--case, the Lie
algebra of the structure group of a PACS--structure is a maximal
subalgebra of the corresponding conformally symplectic Lie
algebra. The computations of Lie algebra cohomologies needed for our
applications of Kostant's theorem are available in the literature on
parabolic contact structures, see Section 4.2 of \cite{book}, which
also provides explicit descriptions of the normalization conditions on
torsions. In that way, we obtain both an explicit description of all
PACS--structures (see Sections \ref{3.2} to \ref{3.5}) and explicit
descriptions and interpretations of the components of the torsion of
the canonical connections (see Sections \ref{4.5} and \ref{4.6}).

There is a second important motivation for this article. The
PCS--structures we introduce can be viewed as forming the geometric
background for the class of \textit{special symplectic connections} as
introduced by M.~Cahen and L.~Schwachh\"ofer in
\cite{Cahen-Schwachhoefer}. The latter form a class of torsion free
linear connections which preserve a symplectic form and satisfy a
certain condition on their curvature (which is related to contact
gradings of simple Lie algebras), see Section \ref{4.7} for
details. This class on the one hand contains all affine connections of
exceptional holonomy, which preserve a symplectic form. According to
the classification of affine holonomies, see \cite{M-S}, these cover a
substantial part of all exceptional holonomies. On the other hand, the
Levi--Civita connections of Bochner--K\"ahler metrics, see
\cite{Bryant}, and of Bochner--bi--Lagrangean metrics are special
symplectic connections. We prove in Theorem \ref{thm4.7} that, except for
connections of Ricci type (which correspond to type $C_n$), any
special symplectic connection is the canonical connection of a torsion
free PCS--structure. Moreover, if the type is also different from
$A_n$, the converse holds, i.e.~the canonical connection of a
torsion free PCS--structure is automatically a special symplectic
connection. In particular, all affine connections with exceptional
symplectic holonomy fall into this class.

For completeness, let us briefly describe the contents of the other
articles in the series. In the second part \cite{PCS2}, we describe
the relation between PCS--structures and parabolic contact
structures. This builds on the corresponding relation between
conformally symplectic structures and contact structures via
contactification as discussed in \cite{Cap-Salac}. In this context, we
also consider conformally Fedosov structures as introduced in
\cite{E-S} (adapted to the version of conformally symplectic
structures that we use). These can be considered as the analogs of
PCS--structures corresponding to the contact gradings of the simple
Lie algebras of type $C_n$. We show that a quotient of a parabolic
contact structure by a transversal infinitesimal automorphism inherits
a PCS--structure of the corresponding type. Moreover, locally any
PCS--structure arises in this way and the inducing parabolic contact
structure is locally unique up to isomorphism.

We also clarify the relation between the canonical connection
associated to the PCS--structure on the quotient and distinguished
connections of the original parabolic contact structure. Finally,
using contactifications, we complete the characterization of special
symplectic connections in terms of PCS--structures. This provides new
proofs and generalizations to cases of non--trivial torsion for several
results from \cite{Cahen-Schwachhoefer}.

In the last article \cite{PCS3} of the series, contactifications are
used to descend BGG sequences and relative BGG sequences associate to
parabolic contact structures to natural sequences of differential
operators on manifolds endowed with PCS--structures and study their
properties. In many situations this can be used to construct
differential complexes intrinsically associated to special symplectic
connections and more general PCS--structures. 

\section{Conformally symplectic structures}\label{2}

We start by looking at (almost) conformally symplectic structures from the
point of view of first order $G$--structures.

\subsection{Almost conformally symplectic structures}\label{2.1}
Traditionally, (locally) conformally symplectic structures are defined
via representative two forms. For our purposes, it will be more
natural to use notions suggested by the theory of conformal
structures. 

\begin{definition}\label{def2.1}
Let $M$ be a smooth manifold of even dimension $n=2m\geq 4$. 

(1) An \textit{almost conformally symplectic structure} on $M$ is a
smooth line subbundle $\ell\subset\La^2T^*M$ in the bundle of
two--forms on $M$ such that for each $x\in M$, each non--zero element
of $\ell_x$ is non--degenerate as a bilinear form on $T_xM$. 

(2) The structure is called \textit{conformally symplectic} if and
only if locally around each $x\in M$ there is a smooth section $\tau$
of $\ell$ which is closed as a two--form and satisfies $\tau(x)\neq
0$.
\end{definition}

Observe that in part (2), one may equivalently replace ``closed'' by
``exact''.

\begin{prop}\label{prop2.1}
Let $M$ be a smooth manifold of even dimension $n=2m\geq 4$ and let
$\ell\subset \La^2T^*M$ be an almost conformally symplectic
structure. Then we have

(i) If $\ell$ is conformally symplectic, then for any local
non--vanishing section $\tau$ of $\ell$, we have $d\tau=\ph\wedge\tau$
for some $\ph\in\Om^1(M)$.

(ii) Conversely, if $n>4$ and for each point $x\in M$, there is a local
section $\tau$ of $\ell$ which satisfies the condition in (i) and
$\tau(x)\neq 0$, then $\ell$ is conformally symplectic.

(iii) If $\ell$ is conformally symplectic, then local closed sections
of $\ell$ are uniquely determined up to a constant factor.
\end{prop}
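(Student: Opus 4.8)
The common thread in all three parts is a piece of symplectic linear algebra. If $\om$ is a non-degenerate two-form on a $2m$-dimensional vector space $V$, then the Lefschetz operator $L_\om\colon\La^kV^*\to\La^{k+2}V^*$, $\al\mapsto\om\wedge\al$, is injective for $k\le m-1$; this is standard symplectic linear algebra, being a consequence of the hard Lefschetz isomorphism $L_\om^{m-k}\colon\La^kV^*\to\La^{2m-k}V^*$ together with the factorisation $L_\om^{m-k}=L_\om^{m-k-1}\circ L_\om$. In particular, wedging a one-form with a non-degenerate two-form is injective as soon as $m\ge2$, i.e.\ $n\ge4$, and wedging a two-form with a non-degenerate two-form is injective as soon as $m\ge3$, i.e.\ $n>4$. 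Applied pointwise to any non-vanishing local section $\tau$ of $\ell$, which is non-degenerate by Definition \ref{def2.1}, this gives, for $n\ge4$, that $\al\mapsto\al\wedge\tau$ is injective on one-forms, and, for $n>4$, that $\be\mapsto\be\wedge\tau$ is injective on two-forms. These two injectivity statements carry the whole proof, and the hypothesis $n>4$ in (ii) enters precisely through the two-form statement.

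For (i) and (iii) I would start from the closed local sections provided by the hypothesis. Since $\ell$ is conformally symplectic, around each point there is a closed section $\sigma$ of $\ell$ with $\sigma(x)\ne0$; shrinking its domain I may assume $\sigma$ is nowhere zero, so it trivialises the line bundle $\ell$. For (i), any local non-vanishing section $\tau$ can be written as $\tau=f\sigma$ with $f$ nowhere zero, and then $d\tau=df\wedge\sigma=(df/f)\wedge\tau$, so $\ph:=df/f$ satisfies $d\tau=\ph\wedge\tau$ near the point. By the one-form injectivity above, $\ph$ is uniquely determined by this equation, so the locally defined one-forms agree on overlaps and assemble to a $\ph$ defined on the whole domain of $\tau$. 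For (iii), writing an arbitrary local closed section $\rho$ as $\rho=f\sigma$ gives $0=d\rho=df\wedge\sigma$, whence $df=0$ by one-form injectivity and $f$ is constant on the connected domain; hence every closed section is a constant multiple of $\sigma$, and any two differ by a constant factor.

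For (ii) I would convert the given data into a closed section. Fix $x$ and a local section $\tau$ with $\tau(x)\ne0$ and $d\tau=\ph\wedge\tau$; after shrinking, $\tau$ is non-vanishing. Applying $d$ to this identity and using $\ph\wedge\ph=0$ gives
\[
0=d\ph\wedge\tau-\ph\wedge d\tau=d\ph\wedge\tau-(\ph\wedge\ph)\wedge\tau=d\ph\wedge\tau .
\]
This is the only place the hypothesis $n>4$ is used: two-form injectivity now yields $d\ph=0$. Hence $\ph$ is closed, so locally $\ph=-dh$ for a smooth function $h$, and with $g:=e^{h}$ one computes $d(g\tau)=(dg+g\ph)\wedge\tau=g(dh+\ph)\wedge\tau=0$. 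Thus $g\tau$ is a closed section of $\ell$ with $(g\tau)(x)=g(x)\tau(x)\ne0$, which is exactly the condition for $\ell$ to be conformally symplectic near $x$.

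The only genuinely non-formal ingredient is the symplectic linear algebra of the first paragraph, and the one delicate point is to recognise that the implication $d\ph\wedge\tau=0\Rightarrow d\ph=0$ needs injectivity of $L_\om$ on two-forms, which fails for $m=2$ (there $\La^4V^*$ is one-dimensional). This is precisely why (ii) must exclude $n=4$, whereas (i) and (iii), which use only the one-form statement, hold for all $n\ge4$.
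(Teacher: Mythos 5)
Your proof is correct and follows essentially the same route as the paper's: writing an arbitrary local section as a function times a closed trivialising section for (i) and (iii), and integrating the closed one--form $\ph$ to produce a closed rescaling for (ii). The only difference is that you make explicit the Lefschetz--type injectivity statements (wedging with a non--degenerate two--form is injective on one--forms for $n\geq 4$ and on two--forms for $n>4$) that the paper subsumes under ``non--degeneracy of $\tau$''.
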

\begin{proof}
(i) It suffices to show this locally, so we can assume that
  $\tau=f\tilde\tau$ for a nowhere vanishing closed section
  $\tilde\tau$ of $\ell$ and a non--zero function $f$. But then
  $d\tau=df\wedge\tilde\tau=(df/f)\wedge\tau$.

  (ii) If $\tau$ is a nowhere vanishing local section of $\ell$ such
  that $d\tau=\ph\wedge\tau$, then we conclude that
  $0=d\ph\wedge\tau$. For $n>4$, non--degeneracy of $\tau$ implies
  $d\ph=0$. Hence restricting to some smaller subset, we can find a
  smooth non--zero function $f$ such that $-\ph=d\log (f)=df/f$ and
  then $\tilde\tau=f\tau$ is closed.

(iii) As in (i), we write $\tau=f\tilde\tau$ with $\tilde\tau$ closed
  and non--vanishing. Then $0=d\tau$ implies $0=df\wedge\tilde\tau$, so
  non--degeneracy of $\tilde\tau$ implies $df=0$, even if $n=4$.
\end{proof}

Observe that for $n=4$, non--degeneracy of a two--form $\tau$ implies
that wedging with $\tau$ is an isomorphism from one--forms to
three--forms. Hence in this case, the condition from part (i) of the
proposition is always satisfied.

\subsection{First order G--structures}\label{2.2} 

Let us briefly review some basics from the theory of first order
G--structures. Consider a finite dimensional real vector space $V$ and
a Lie group $G$ endowed with an infinitesimally injective
representation on $V$. This means that one has given a homomorphism
$G\to GL(V)$ whose derivative is injective, thus identifying the Lie
algebra $\frak g$ of $G$ with a Lie subalgebra of $L(V,V)$. Then a
first order structure with structure group $G$ on a smooth manifold
$M$ with $\dim(M)=\dim(V)$ is defined as a smooth principal fiber
bundle $\Cal P\to M$ with structure group $G$, that is endowed with a
one--form $\th\in\Om^1(\Cal P,V)$. The form $\th$ has to be
$G$--equivariant and \textit{strictly horizontal} in the sense that
its kernel in each point is the vertical subbundle of $\Cal P\to
M$. This means that for each point $u\in\Cal P$ lying over $x\in M$,
the value $\th(u)$ descends to a linear isomorphism $T_xM\to V$, so
one obtains a map to the linear frame bundle of $M$. In particular,
$\th$ gives rise to an identification of the associated bundle $\Cal
P\x_GV$ with the tangent bundle $TM$.

The fundamental invariants of such structures are obtained via
connections. Recall that on any principal bundle there are principal
connections and that any principal connection on $\Cal P$ induces a
linear connection on the associated vector bundle $TM$. This induced
linear connection has a torsion which is a section of the bundle
$\La^2T^*M\otimes TM\cong\Cal P\x_G(\La^2V^*\otimes V)$. Now the
dependence of the torsion on the choice of connection can be described
using linear algebra, via the process of prolongation, see
\cite{Sternberg}.

As noted above, the Lie algebra $\frak g$ of $G$ is a subalgebra of
$L(V,V)=V^*\otimes V$. Then it is well known that the space of
principal connections on $\Cal P$ is an affine space modeled on the
space of smooth sections of the associated bundle $\Cal
P\x_G(V^*\otimes\frak g)$. The change of torsion induced by a
change of connection is described by a $G$--equivariant linear map
$$
\partial:V^*\otimes \frak g\to \La^2V^*\otimes V
$$ called the Spencer differential. Explicitly, this is given by first
including $V^*\otimes\frak g$ into $V^*\otimes V^*\otimes V$ and then
alternating in the first two arguments. Alternatively, viewing the
domain and target as linear maps and skew symmetric bilinear maps,
respectively, one has $\partial\Ph(v,w):=\Ph(v)(w)-\Ph(w)(v)$.

The subspace $\im(\partial)\subset L(\La^2V,V)$ gives rise to a smooth
subbundle in $\La^2T^*M\otimes TM$, and we denote by $\Cal J$ the
quotient of $\La^2T^*M\otimes TM$ by this subbundle. Then it is clear
from the above description that the projection of the torsion of the
induced connection on $TM$ to this quotient bundle is the same for all
principal connections on $\Cal P$. Hence one obtains a section of
$\Cal J$ which is an invariant of first order structures with
structure group $G$. This is called the \textit{intrinsic torsion} of
the structure.

On the other hand, $\frak g^{(1)}:=\ker(\partial)\subset
V^*\otimes\frak g$ is called the \textit{first prolongation} of $\frak
g$. The above discussion shows that for a fixed principal connection
$\ga$ on $\Cal P$, the space of all principal connections on $\Cal P$
which have the same torsion as $\ga$ is an affine space modeled on
sections of the associated bundle $\Cal P\x_G\frak g^{(1)}$. In
particular, if $\frak g^{(1)}=\{0\}$, then any principal connection on
$\Cal P$ is uniquely determined by its torsion.

The standard way to proceed further is to choose a $G$--invariant
linear subspace $N\subset \La^2V^*\otimes V$, which is complementary
to $\im(\partial)$. Usually, one refers to $N$ as a
\textit{normalization condition}. Via associated bundles, $N$
determines a smooth subbundle $\Cal N\subset\La^2T^*M\otimes TM$ and
from above we see that there always exist \textit{normal} principal
connections, i.e.~ones for which the torsion is a section of $\Cal
N$. The space of all normal connections is an affine space modeled on
$\Cal P\x_G\frak g^{(1)}$.

A classical simple example of this situation is the case $G=O(V)$, the
orthogonal group of a non--degenerate bilinear form on $V$. In this
case, first order structures with structure group $G$ are equivalent
to pseudo--Riemannian metrics on manifolds of dimension $n=\dim(V)$ of
the signature of the given bilinear form. One easily verifies that
$\partial$ is a linear isomorphism in this case, which shows that any
such geometry admits a unique torsion--free connection.

\subsection{}\label{2.3}
To apply this in the case of almost conformally symplectic structures,
consider a symplectic vector space $(V,b)$ of dimension $n=2m$ and
define
\begin{gather*}
Sp(V):=\{A\in GL(V):b(Av,Aw)=b(v,w) \quad \forall v,w,\in V\}\\
CSp(V):=\{A\in GL(V):\exists\la\in\Bbb R:b(Av,Aw)=\la b(v,w)  \quad
\forall v,w,\in V\}, 
\end{gather*}
the \textit{symplectic group} and the \textit{conformally symplectic
  group} of $V$. These are closed subgroups of $GL(V)$ and it is well
known that the Lie algebra $\frak{sp}(V)$ is simple, while
$\mathfrak{csp}(V)=\Bbb R\oplus\frak{sp}(V)$ is reductive with
one--dimensional center. Moreover, as a representation of $Sp(V)$, we
have $\mathfrak{sp}(V)\cong S^2V$, the symmetric square of the
standard representation $V$. 

The symplectic inner product $b$ determines a non--degenerate element
in $\La^2V^*$, whose inverse is a non--degenerate element
$b^{-1}\in\La^2V$. This gives rise to a $Sp(V)$--equivariant map
$\La^kV^*\to\La^{k-2}V^*$, which is surjective for $k\leq m+1$. Its
kernel is called the \textit{tracefree part}
$\La^k_0V^*\subset\La^kV^*$.

\begin{prop}\label{prop2.3}
Let $V$ be a real symplectic vector space of (even) dimension
$n$. Then we have:

(1) A first order structure with structure group $G:=CSp(V)$ is
  equivalent to an almost conformally symplectic structure.

  (2) For $\frak g:=\frak{csp}(V)$, we obtain $\frak g^{(1)}\cong
  S^3V$ and $(\La^2V^*\otimes V)/\im(\partial)\cong\La^3_0V^*$ as
  representations of $Sp(V)$. In particular $\partial$ is surjective
  for $n=4$.

(3) If $n>4$, then an almost conformally symplectic structure has
  vanishing intrinsic torsion if and only if it is conformally
  symplectic.

(4) For any torsion free connection compatible with a conformally
  symplectic structure, the induced connection on
  $\ell\subset\La^2T^*M$ is flat and its local parallel sections are
  exactly those sections of $\ell$ which are closed as two--forms on
  $M$. 
\end{prop}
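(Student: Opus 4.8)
The plan is to treat the four assertions in turn, with the representation--theoretic computation in (2) as the technical core.

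For (1), I would argue that $CSp(V)$ is exactly the stabilizer in $GL(V)$ of the line $\Bbb R b\subset\La^2V^*$: the condition $A^*b=\la b$ for some $\la$ is precisely the defining condition of $CSp(V)$, and $\la\neq 0$ since $A$ is invertible. Hence by the standard correspondence between first order $G$--structures and $G$--invariant tensorial objects, a reduction of the frame bundle to $CSp(V)$ is the same thing as a smooth field of lines $\ell\subset\La^2T^*M$; nondegeneracy of $b$ translates into nondegeneracy of every nonzero element of each $\ell_x$, so such reductions are exactly the almost conformally symplectic structures.

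For (2), I would use $b$ to identify $V\cong V^*$ and $\frak{sp}(V)\cong S^2V^*$, so that $V^*\otimes\frak{sp}(V)\subset V^*\otimes V^*\otimes V^*$ is the subspace symmetric in the last two slots, and $\ker\partial$ is cut out by symmetry in the first two slots. Symmetries of both adjacent pairs of slots force total symmetry, so $\frak{sp}(V)^{(1)}\cong S^3V^*\cong S^3V$. To pass to $\frak g=\frak{csp}(V)=\Bbb R\,\id\oplus\frak{sp}(V)$, the key device is the $Sp(V)$--equivariant map $\Ps\colon\La^2V^*\otimes V\to\La^3V^*$ given by $\Ps(T)(u,v,w)=b(T(u,v),w)+b(T(v,w),u)+b(T(w,u),v)$. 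A direct computation shows that $\Ps\o\partial$ vanishes on the summand $V^*\otimes\frak{sp}(V)$ and equals twice the Lefschetz map $\ph\mapsto\ph\wedge b$ on the summand $V^*\cong V^*\otimes\Bbb R\,\id$. Hence for $\Ph=\ph\,\id+A\in\ker\partial$ we get $\ph\wedge b=0$, and since wedging with the symplectic form $b$ is injective on one--forms for $n\geq 4$, we conclude $\ph=0$, which reduces us to the symplectic case and gives $\frak g^{(1)}\cong S^3V$.

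For the cokernel I would note that $\Ps$ is onto $\La^3V^*$ (a preimage of any three--form is obtained by raising an index with $b^{-1}$) and that its image on $\im(\partial)$ is exactly $b\wedge V^*$, the complement of the tracefree part $\La^3_0V^*$. Therefore $\Ps$ descends to a surjection $\overline\Ps\colon(\La^2V^*\otimes V)/\im(\partial)\to\La^3_0V^*$; comparing dimensions, using $\dim\frak g^{(1)}=\dim S^3V$ from the previous step, shows $\overline\Ps$ is an isomorphism. For $n=4$ one has $\La^3_0V^*=0$, so $\partial$ is surjective. I expect the identification of $\Ps\o\partial$ to be the main obstacle, as it is what simultaneously removes the central direction from the prolongation and pins down the normalization space $\La^3_0V^*$. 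For (3) and (4) I would then fix a compatible linear connection $\nabla$ with torsion $T$ and a local nonvanishing section $\tau$ of $\ell$; since $\nabla$ preserves $\ell$ we may write $\nabla\tau=\al\otimes\tau$ for a one--form $\al$. The standard formula expressing $d$ through a connection with torsion yields, up to sign, $d\tau=\al\wedge\tau+\Ps_\tau(T)$, where $\Ps_\tau$ is the bundle version of $\Ps$ built from $\tau$. Since $\al\wedge\tau$ lies in $\tau\wedge T^*M$, the tracefree part of $d\tau$ equals $\overline\Ps$ applied to the intrinsic torsion. For $n>4$, assertion (3) then follows from Proposition \ref{prop2.1}(ii): vanishing intrinsic torsion is equivalent to $d\tau$ being of the form $\ph\wedge\tau$ for local sections $\tau$, hence to $\ell$ being conformally symplectic. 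For (4), torsion freeness makes $\Ps_\tau(T)=0$, so $d\tau=\al\wedge\tau$; injectivity of $\wedge\tau$ on one--forms (valid for all $n\geq4$) shows that $\tau$ is closed iff $\al=0$, i.e.\ iff $\tau$ is parallel, giving the description of parallel sections. Finally, conformal symplecticity provides a local closed, hence parallel, section of $\ell$, and the existence of a local parallel frame forces the induced connection on the line bundle $\ell$ to be flat.
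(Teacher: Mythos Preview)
Your argument is correct. Parts (1), (3), (4) proceed along the same lines as the paper: the paper also writes $\nabla\tau$ as a one--form times $\tau$, identifies the intrinsic torsion with the tracefree part of the alternation of $i_T\tau$ (your $\Psi_\tau(T)$), and invokes Proposition~\ref{prop2.1} for (3) and injectivity of $\wedge\tau$ for (4).

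The genuine difference is in (2). The paper argues by fully decomposing both $V^*\otimes\frak{csp}(V)$ and $\La^2V^*\otimes V$ into $Sp(V)$--irreducibles (identifying the common irreducible piece $W\cong W'$ sitting in the tracefree parts), and then checks on which isotypic components $\partial$ is injective. Your approach bypasses this decomposition entirely: the single $Sp(V)$--equivariant map $\Psi$ simultaneously kills $\partial(V^*\otimes\frak{sp}(V))$ and is $2\,\ph\wedge b$ on the central summand, so the prolongation question for $\frak{csp}(V)$ reduces to that for $\frak{sp}(V)$ plus injectivity of the Lefschetz map; the cokernel identification then comes from surjectivity of $\Psi$ and a dimension count. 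This is more elementary---it needs no knowledge of how $S^2V\otimes V$ or $\La^2_0V^*\otimes V^*$ break up---and it has the pleasant feature that the same map $\Psi$ reappears as the bundle map governing $d\tau$ in (3) and (4). The paper's route, on the other hand, gives finer structural information (the explicit irreducible constituents of $\im(\partial)$), which is not needed here but is closer in spirit to the Kostant--theoretic arguments used later for the PACS cases.
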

\begin{proof}
  (1) The line in $\La^2V^*$ spanned by $b$ is by definition invariant
  under $G$, so it gives rise to a smooth line subbundle
  $\ell\subset\La^2T^*M$. Non--degeneracy of $b$ implies that any
  non--zero element in $\ell$ is non--degenerate, so it defines an
  almost conformally symplectic structure.

  Conversely, suppose that we are given a smooth line subbundle
  $\ell\subset\La^2T^*M$ in which each non--zero element of $\ell$ is
  non--degenerate. Then for each point $x\in M$, one defines $\Cal
  P_x$ to consist of all linear isomorphisms $V\to T_xM$ such that the
  induced isomorphism $\La^2V^*\to \La^2T^*M$ maps $b$ to an element
  of $\ell_x$. Of course, there is at least one such isomorphism and
  fixing this, one obtains a bijection between $\Cal P_x$ and
  $G=CSp(V)$. One then defines $\Cal P$ to be the disjoint union of
  the spaces $\Cal P_x$ and obtains a natural projection $\Cal P\to
  M$. It is then easy to see that this is a smooth principal
  $G$--bundle over $M$ which defines a reduction of structure group as
  required.

(2) We know that $\partial$ is $CSp(V)$ equivariant, and we analyze
  its kernel and its image as representations of $Sp(V)$. We have
  noted above that $\frak{csp}(V)\cong\Bbb R\oplus S^2V$. Tensorizing
  with $V^*\cong V$, it is well known that $S^2V\otimes V$ decomposes
  as a direct sum of three irreducible representations. Using $b$, one
  can form a trace $S^2V\otimes V\to V$ and the kernel of this trace
  decomposes as the direct sum of $S^3V$ and an irreducible
  representation $W$ (the kernel of the symmetrization map inside the
  tracefree part).

Likewise, $\La^2V^*$ decomposes as $\La^2_0V^*\oplus\Bbb R$. Forming
the tensor product with $V\cong V^*$, $\La^2_0V^*\otimes V^*$ again
decomposes as a direct sum of three irreducibles if $n>4$, while there
are only two irreducible components for $n=4$. Using $b^{-1}$, one
defines a trace $\La^2_0V^*\otimes V^*\to V^*$, and its kernel
decomposes into $\La^3_0V^*\oplus W'$, where the first summand is
non--zero only for $n>4$ and $W'$ is the kernel of the alternation
within the tracefree part. It is well known that $W$ and $W'$ are
isomorphic irreducible representations of $Sp(V)$.

Since the irreducible representation $S^3V$ does not occur in
$\La^2V^*\otimes V$, it must be contained in
$\frak{csp}(V)^{(1)}$. One then easily verifies directly that
$\partial$ is injective on the sum of two copies of $V$ contained in
$V^*\otimes \frak{csp}(V)$ and non--zero (and thus injective) on the
irreducible subspace $W$. This proves the claim on the first
prolongation. Moreover, it also implies that $\im(\partial)$ is
isomorphic to $W\oplus V\oplus V$, which immediately implies the
second claim.

(3) For an almost conformally symplectic structure
$\ell\subset\La^2T^*M$, consider the corresponding first order
structure $\Cal P\to M$ with structure group $G$. By construction, a
linear connection $\nabla$ on $TM$ is induced by a principal
connection $\ga$ on $\Cal P$ if and only if the line subbundle $\ell$
is preserved by the induced connection on $\La^2T^*M$. More
explicitly, for a vector field $\xi\in\frak X(M)$ and a local
non--vanishing section $\tau$ of $\ell$, also $\nabla_\xi\tau$ has to be
a section of $\ell$.

For such a section $\tau$, we define $i_T\tau$ to be the map
$(\xi,\eta,\zeta)\mapsto\tau(T(\xi,\eta),\zeta)$. From part (2) we
conclude that the intrinsic torsion of our geometry vanishes if and
only if the three form obtained by alternating $i_T\tau$ is a section
of the trace part of $\La^3T^*M$, i.e.~can be written as
$\tau\wedge\ph$ for some $\ph\in\Om^1(M)$. Now it is well known that
for a torsion--free connection, $d\tau$ can be computed as the
alternation of $\nabla\tau$. In the presence of torsion, this formula has
to be modified by adding a non--zero multiple of the alternation of
$i_T\tau$. But since the alternation of $\nabla\tau$ automatically is
a section of the trace part, we conclude that vanishing of the
intrinsic torsion is equivalent to the fact that $d\tau$ is a section
of the trace part. So this means $d\tau=\tau\wedge\ph$ for some
$\ph\in\Om^1(M)$ and from Proposition \ref{prop2.1} we know that, for
$n>4$, this is equivalent to $\ell$ defining a conformally symplectic
structure.

(4) From the proof of part (3) we know that for a locally
non--vanishing section $\al$ of $\ell$ and any connection $\nabla$
compatible with the structure, we have $\nabla\al=\ph\otimes\al$ for
some one--form $\ph$ on $M$. If $\nabla$ is torsion free, then the
alternation $\ph\wedge\al$ of this coincides with $d\al$. So if
$d\al=0$, then $0=\ph\wedge\al$ and we have noted above that by
non--degeneracy of $\al$ this implies $\ph=0$. Hence any locally
closed section of $\ell$ is parallel for $\nabla$, which implies both
claims.  
\end{proof}

\section{PACS-- and PCS--structures}\label{3}
In this section, we introduce the geometric structures studied in this
article and describes their basic properties. The theory of simple Lie
algebras leads to a family of subalgebras of conformally symplectic
Lie algebras via so--called contact gradings. Groups with this Lie
algebra then give rise to geometric structures, each of which has an
underlying almost conformally symplectic structure. Via the
classification of simple Lie algebras, one can describe the resulting
geometric structures explicitly.

\subsection{Contact gradings of simple Lie algebras}\label{3.1}
Let $\frak g$ be a simple Lie algebra over $\Bbb K=\Bbb R$ or $\Bbb
C$. A \textit{contact grading} on $\frak g$ is a decomposition 
$$
\frak g=\frak g_{-2}\oplus\frak g_{-1}\oplus\frak g_0\oplus\frak
g_1\oplus\frak g_2, 
$$ such that $[\frak g_i,\frak g_j]\subset\frak g_{i+j}$ (with $\frak
g_{\ell}=\{0\}$ if $|\ell|>2$), and such that $\dim_{\Bbb K}(\frak
g_{-2})=1$ and the Lie bracket of $\frak g$ restricts to a
non--degenerate bilinear map $\frak g_{-1}\x\frak g_{-1}\to\frak
g_{-2}$. Since $\frak g_{-2}$ is one--dimensional, this bracket gives
rise to a well defined one--dimensional subspace in $\La^2(\frak
g_{-1})^*$, in which each non--zero element is non--degenerate.

Gradings of this kind are used in the theory of quaternionic and
para--qua\-ter\-nio\-nic symmetric spaces and in the theory of
parabolic contact structures, the latter will be discussed briefly in
the second part of this series. It is well known that any complex
simple Lie algebra admits (up to isomorphism) a unique grading of this
type. Moreover, this grading can be restricted to almost all
non--compact real forms, see Section 3.2.4 and Example 3.2.10 of
\cite{book} for a complete discussion. 

From the grading property, it is clear that $\frak g_0$ acts on each
$\frak g_i$ via the restriction of the adjoint action. It is well
known that the resulting representation $\frak g_0\to\frak{gl}(\frak
g_{-1})$ is faithful. Since the adjoint action preserves the Lie
bracket, we conclude that it preserves the line in $\La^2(\frak
g_{-1})^*$ constructed above, so we actually get an inclusion $\frak
g_0\hookrightarrow\frak{csp}(\frak g_{-1})$. 

Next, we have to choose a group $G_0$ with Lie algebra $\frak
g_0$. For the general discussion of PACS--structures, we only have to
assume that the representation $\frak g_0\to\frak{csp}(\frak g_{-1})$
integrates to a representation $G_0\to CSp(\frak g_{-1})$, which then
is infinitesimally faithful by construction. As usual in the theory of
first order geometric structures, different choices of groups leads to
very similar geometries. We will describe the geometries for one
natural choice of group below and not go into further details on
possible other choices of groups. When dealing with contactifications
in the second part of this series of articles, we will have to
restrict the choice of groups a bit.

Having chosen $G_0$, we can consider first order $G_0$--structures on
smooth manifolds of dimension $\dim(\frak g_{-1})$, and by
construction any such structure will have an underlying almost
conformally symplectic structure.  There is a particular case here,
that we have to exclude from the further discussion. Namely, if one
takes $\frak g$ of type $C_n$, i.e.~if $\frak g$ is a symplectic Lie
algebra, then the resulting contact grading has the property that
$\frak g_0$ is the full Lie algebra $\frak{csp}(\frak g_{-1})$. Hence
in this case, a reduction to a structure group with Lie algebra $\frak
g_0$ is essentially only an almost conformally symplectic structure,
and there is no additional structure there. Therefore, in the rest of
this article, we will always assume that $\frak g$ is not of type
$C_n$. 

\begin{definition}\label{def3.1}
Let $\frak g$ be a simple Lie algebra over $\Bbb R$ which admits a
contact grading, let $\frak g_0\hookrightarrow\frak{csp}(\frak
g_{-1})$ be the corresponding representation. Let $G_0$ be a Lie group
with Lie algebra $\frak g_0$ such that there is a corresponding
homomorphism $G_0\to CSp(\frak g_{-1})$.

(1) The first order structures with structure group $G_0$ on manifolds
of dimension $\dim(\frak g_{-1})$ coming from the representation
$G_0\to CSp(\frak g_{-1})$ are called \textit{parabolic almost
  conformally symplectic structures} or PACS--structures (associated to
$G_0$).

(2) A \textit{parabolic conformally symplectic structure} or
PCS--structure is a PACS--structure for which the underlying almost
conformally symplectic structure is conformally symplectic.
\end{definition}

\begin{remark}\label{rem3.1}
(i) We will refine the terminology on PACS-- and PCS--structures in
  the discussion of the individual examples (according to the
  classification of simple Lie algebras admitting a contact grading)
  in the rest of this section. 

(ii) The Lie subalgebra $\frak g_0$ is known to be reductive, with
  center of dimension $2$ in the $A_n$--case and dimension $1$ in all
  other cases. There is a natural codimension--one subalgebra
  $\frak g^0_0\subset\frak g_0$, consisting of those elements of
  $\frak g_0$ which act trivially on $\frak g_{\pm 2}$ under the
  adjoint action. By construction, this coincides with $\frak
  g_0\cap\frak{sp}(\frak g_{-1})$, and, apart from the $A_n$--case, it
  also coincides with the semisimple part of $\frak g_0$. The
  resulting subalgebras of symplectic Lie algebras are exactly the
  \textit{special symplectic subalgebras} as introduced by M.~Cahen
  and L.~Schwachh\"ofer in \cite{Cahen-Schwachhoefer}. 

The main notion introduced in that reference is the one of a
\textit{special symplectic connection}, a torsion free connection,
which preserves a symplectic form and whose curvature lies in a
certain space $\Cal R_{\frak g_0^0}$ associated to a special
symplectic subalgebra $\frak g_0^0$. The family of special symplectic
connections in particular includes all affine connections having
exceptional holonomy of symplectic type. A detailed description of
$\Cal R_{\frak g_0^0}$ will be given in Section \ref{4.7}, where we
also discuss the relation between PCS--structures and special
symplectic connections.
\end{remark}

\subsection{PACS--structures of K\"ahler and para--K\"ahler
  type}\label{3.2} 
To obtain more explicit descriptions of PACS--structures, we have to
go through the list of contact gradings of simple Lie algebras. From
their use in the theory of parabolic contact structures, algebraic
descriptions of the resulting subalgebras in conformally symplectic
algebras are available. This can be directly converted into
information on the corresponding first order structure. 

We start this discussion with simple Lie algebras of type $A_n$,
i.e.~real forms of $\frak{sl}(n,\Bbb C)$. It is well known that there
are three different types of real forms of this algebra, namely
$\frak{sl}(n,\Bbb R)$, $\frak{sl}(n/2,\Bbb H)$ (for even $n$) and
$\frak{su}(p,q)$ with $p+q=n$. Contact gradings are available on
$\frak{sl}(n,\Bbb R)$ and on $\frak{su}(p,q)$ if both $p$ and $q$ are
non--zero. 

The contact grading of $\frak{sl}(n+2,\Bbb R)$ is described in Section
4.2.3 of \cite{book}. The grading component $\frak g_{-1}\cong\Bbb
R^{2n}$ splits as a direct sum $\frak g_{-1}^E\oplus\frak g_{-1}^F$ of
two subspaces of dimension $n$, which both are isotropic for the Lie
bracket. The subalgebra $\frak g_0\subset\frak{csp}(\frak g_{-1})$
consists of those maps which preserve this decomposition. A natural
choice for a group $G_0$ thus is the subgroup of $CSp(\frak g_{-1})$
consisting of all maps which preserve that decomposition of $\frak
g_{-1}$. Motivated by the description in the following Proposition, we
will refer to the corresponding geometric structure as a
\textit{PACS--structure of para--K\"ahler type}.

The contact grading of $\frak{su}(p+1,q+1)$ for $p+q=n$ is discussed
in Section 4.2.4 of \cite{book}. Here $\frak g_{-1}$ is a complex
vector space and the bracket has the property that $[iX,iY]=[X,Y]$ for
all $X,Y\in\frak g_{-1}$. Moreover, $\frak g_0$ consists of all maps
in $\frak{csp}(\frak g_{-1})$ which are complex linear. This implies
that $[\ ,\ ]$ is the imaginary part of a $\frak g_{-2}$--valued
Hermitian form, which turns out to have signature $(p,q)$. A natural
choice for a group $G_0$ thus is the subgroup of complex linear maps
in $CSp(\frak g_{-1})$, so $G_0\cong CU(p,q)$ is a conformal
pseudo--unitary group. We will call the corresponding geometric
structure a \textit{PACS--structure of K\"ahler type}.

To formulate the description of these types of PACS--structures, let
us recall some concepts. An \textit{almost Hermitian metric} on a
smooth manifold of even dimension $2n$ is given by an almost complex
structure $J$ on $M$ and a pseudo--Riemannian metric $g$ which is
Hermitian with respect to $J$, i.e.~such that
$g(J\xi,J\eta)=g(\xi,\eta)$ for all tangent vectors $\xi$ and
$\eta$. Given such a structure, any conformal rescaling of $g$ defines
an almost Hermitian metric on $M$, too, so it is no problem to talk
about a conformal class of almost Hermitian metrics. The basic
properties of almost Hermitian metrics are analyzed in the seminal
article \cite{G-H} of Gray and Hervella. The main ingredient there is
the \textit{fundamental two--form} of $g$ defined by
$\om(\xi,\eta):=-g(\xi,J\eta)$, so this is non--degenerate in each
point. In particular, $(g,J)$ is called an \textit{almost K\"ahler
  metric} if and only if $\om$ is closed.

Similarly, an \textit{almost para--Hermitian metric} on a smooth
manifold $M$ of even dimension $2n$ is defined as a decomposition
$TM=E\oplus F$ into two sub--bundles of rank $n$ and a
pseudo--Riemannian metric $g$ on $M$, for which both $E$ and $F$ are
isotropic. This implies that $g$ has split--signature $(n,n)$. As
above, it is no problem to consider conformal classes of almost
para--Hermitian metrics. The similarity to the Hermitian case becomes
evident if one describes the decomposition as an \textit{almost
  para--complex structure} $\Cal J:TM\to TM$, where $\Cal J$ acts as
the identity on $E$ and as minus the identity on $F$, so $\Cal
J^2=\id$. Then the compatibility of $g$ with the decomposition is
equivalent to the fact that $g(\Cal J\xi,\eta)=-g(\xi,\Cal J\eta)$, and
one defines a fundamental two--form $\om$ as above. The metric is then
called \textit{almost para--K\"ahler} if and only if $\om$ is closed.

\begin{prop}\label{prop3.2}
(1) A PACS--structure of K\"ahler type of signature $(p,q)$ on a
  smooth manifold $M$ of real dimension $2(p+q)\geq 4$ is equivalent
  to a conformal class of almost Hermitian metrics $(g,J)$ on $M$. 

(2) A PACS--structure of para--K\"ahler type on a smooth manifold $M$
of real dimension $2n\geq 4$ is given by a conformal class of
para--Hermitian metrics $(g,TM=E\oplus F)$ on $M$. 

(3) In both cases, the underlying almost conformally symplectic
structure $\ell\subset\La^2T^*M$ is spanned by the fundamental
two--forms of the metrics in the class. In particular, the structure
is PCS if and only if the conformal class locally contains
almost (para--)K\"ahler metrics, which then are unique up to a
constant factor.
\end{prop}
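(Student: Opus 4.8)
The plan is to use the standard dictionary between first order $G_0$--structures and reductions of the linear frame bundle: if $G_0\subset GL(V)$ is the stabilizer of some algebraic data on $V=\frak g_{-1}$, then a $G_0$--structure on $M$ is the same thing as a copy of that data on $TM$, transported through the tautological identification $TM\cong\Cal P\x_{G_0}V$. So the first task is to recognize $G_0$ as such a stabilizer. For part (1), recall that $G_0\cong CU(p,q)$ is the group of complex linear maps inside $CSp(V)$, so it is precisely the stabilizer in $GL(V)$ of the pair consisting of the complex structure $J_0$ on $V$ and the conformally symplectic line $\Bbb R b\subset\La^2V^*$; note that $J_0$--invariance of $b$, i.e.\ $b(J_0\cdot,J_0\cdot)=b$, is built into the data. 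A reduction to $G_0$ is therefore equivalent to an almost complex structure $J$ on $M$ together with an almost conformally symplectic structure $\ell$ that is $J$--invariant, in the sense that each non--zero $\tau\in\ell_x$ satisfies $\tau(J\cdot,J\cdot)=\tau$. The remaining point is a pointwise linear--algebra statement: for such a compatible pair, $g(\xi,\eta):=\tau(\xi,J\eta)$ is a non--degenerate symmetric bilinear form (its symmetry being exactly the $J$--invariance of $\tau$) which is Hermitian for $J$, and replacing $\tau$ by $\la\tau$ within $\ell$ replaces $g$ by $\la g$; conversely an almost Hermitian $(g,J)$ gives back $\tau=\om$. This yields the asserted bijection with conformal classes of almost Hermitian metrics, the signature $(p,q)$ being read off from that of $(b,J_0)$.

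Part (2) is completely parallel, with $J_0$ replaced by the para--complex structure $\Cal J_0$ acting as $\pm\id$ on $\frak g_{-1}^E$ and $\frak g_{-1}^F$, and with the compatibility taking the form $\tau(\Cal J_0\cdot,\Cal J_0\cdot)=-\tau$; the sign changes are exactly the ones already visible in the definitions above, and isotropy of $\frak g_{-1}^E,\frak g_{-1}^F$ for $b$ becomes isotropy of $E,F$ for $g$, forcing split signature $(n,n)$.

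For part (3), the key observation is that in both constructions the fundamental two--form $\om(\xi,\eta)=-g(\xi,J\eta)$ (respectively its para--analogue) reproduces exactly the spanning section $\tau$ of $\ell$, since $\om=b$ is immediate from $J^2=-\id$ (respectively $\Cal J^2=\id$) together with the compatibility identity. Thus $\ell$ is spanned by the fundamental two--forms of the metrics in the class. By the definition of a PCS--structure, being PCS means that $\ell$ is conformally symplectic, i.e.\ that it locally admits a closed non--vanishing section; since every local section of $\ell$ is a function multiple of some $\om_g$ and $\om_g$ is closed precisely when $(g,J)$ is almost (para--)K\"ahler, this is equivalent to the local existence of an almost (para--)K\"ahler representative. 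Finally, uniqueness up to a constant factor follows from Proposition \ref{prop2.1}(iii): local closed sections of $\ell$ are unique up to a constant, and with $J$ already fixed by the structure, a closed $\om$ determines its metric through $g(\xi,\eta)=\tau(\xi,J\eta)$.

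The step that I expect to need the most care is the equivalence, in parts (1) and (2), between the reduction of structure group and the stated geometric data. Concretely, one must check that the stabilizer of $(J_0,\Bbb R b)$ (respectively of the pair consisting of the decomposition and $\Bbb R b$) is exactly $G_0$ and not something larger, and that non--degeneracy and compatibility survive the passage in both directions; the bookkeeping of the conformal factor also enters here, since it is forced to be positive in the Hermitian case (which pins down the signature as $(p,q)$) but may be an arbitrary non--zero scalar in the split case.
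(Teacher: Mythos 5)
Your proposal is correct and follows essentially the same route as the paper: identify $G_0$ as the stabilizer of the pair (almost complex/para--complex structure, conformally symplectic line), convert the reduction into the compatible pair $(J,\ell)$ on $M$, pass to metrics via $g(\xi,\eta)=\tau(\xi,J\eta)$, and deduce (3) from the fact that $\ell$ is spanned by the fundamental two--forms together with Proposition \ref{prop2.1}(iii). You merely spell out the linear--algebra verifications (symmetry, signature, sign conventions) that the paper leaves implicit.
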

\begin{proof}
(1) From the description of $G_0$ it is clear that a first order
  structure corresponding to this group on $M$ is equivalent to an
  almost conformally symplectic structure $\ell\subset\La^2T^*M$ and
  an almost complex structure $J$ on $M$ such that for each $x\in M$
  any element of $\ell_x$ is Hermitian for $J_x$. Now given a
  non--zero element $\om\in\ell_x$, one defines $g_x:T_xM\x
  T_xM\to\Bbb R$ by $g_x(\xi,\eta)=\om(\xi,J\eta)$, which by
  construction is Hermitian and of signature $(p,q)$. Different
  elements in $\ell_x$ lead to proportional metrics, so we obtain a
  conformal class of almost--Hermitian metrics. Conversely, given such
  a class, one takes the almost complex structure $J$ together with
  the line $\ell$ spanned by the fundamental two--forms to obtain a
  first order structure with group $G_0$. 

(2) is proved in the same way as (1). 

(3) From the construction in the proof of (1) it is clear that
  $\ell\subset\La^2T^*M$ is spanned by the fundamental two--forms of
  the metrics in the class. But then by definition, a local section of
  $\ell$ is closed if and only if the corresponding local metric is
  almost (para--)K\"ahler. 
\end{proof}

\subsection{PACS--structures of Grassmannian type}\label{3.3}
We next move to real forms of the complex orthogonal Lie algebras
$\frak{so}(n,\Bbb C)$, i.e.~to types $B_m$ and $D_m$ in the
classification of simple Lie algebras. In view of the isomorphisms
$B_2\cong C_2$ and $D_3\cong A_3$, we only have to look at case $n\geq
7$ here. The obvious real forms in this case are the orthogonal Lie
algebras $\frak{so}(p,q)$ with $p+q=n$, and these admit a contact
grading provided that $p,q\geq 2$. For the even orthogonal Lie
algebras, there is a second real form which is discussed in Section
\ref{3.4} below.

So we have to consider real Lie algebras of the form
$\frak{so}(p+2,q+2)$ with $p+q=n\geq 3$ and their contact gradings are
described in Section 4.2.5 of \cite{book}. In this case $\frak g_{-1}$
is a space of matrices, $\frak g_{-1}\cong M_{n,2}(\Bbb R)=\Bbb
R^{2*}\otimes\Bbb R^n$ where $n=p+q$. Correspondingly, there is a
natural inclusion of $\frak s(\frak{gl}(2,\Bbb
R)\oplus\frak{gl}(n,\Bbb R))$ into $\frak{gl}(\frak g_{-1})$ and $\frak
g_0$ is contained in that subalgebra and hence acts by maps preserving
the tensor product decomposition. Hence, as a representation of $\frak
g_0$, we get
$$ 
\La^2(\frak g_{-1})^*=\La^2(\Bbb R^2\otimes\Bbb R^{n*})\cong
(S^2\Bbb R^2\otimes\La^2\Bbb R^{n*})\oplus (\La^2\Bbb R^2\otimes
S^2\Bbb R^{n*}), 
$$ and the line corresponding to the bracket $\frak g_{-1}\x\frak
g_{-1}\to\frak g_{-2}$ is contained in the second summand. Hence it
gives rise to an inner product on $\Bbb R^n$ defined up to scale, and
this has signature $(p,q)$. Given this line, the algebra $\frak g_0$
consists of those elements of $\frak{csp}(\frak g_{-1})$ which are
compatible with the tensor product decomposition. 

Hence we obtain a natural choice of group $G_0$ by intersecting
$CSp(\frak g_{-1})$ with the subgroup of $GL(\frak g_{-1})$ consisting
of those maps which preserve the tensor product decomposition. The
latter group is the image of the natural homomorphism $GL(2,\Bbb R)\x
GL(n,\Bbb R)\to GL(\frak g_{-1})$ obtained by multiplying with
matrices from both sides. Up to a covering, $G_0$ is isomorphic to
$GL(2,\Bbb R)\x SO(p,q)$.

To describe the corresponding PACS--structures, let us recall a bit of
background. Suppose that $M$ is a manifold of real dimension
$2n$. Then an \textit{almost Grassmannian structure} of type $(2,n)$
on $M$ is given by two auxiliary vector bundles $E$ and $F$ over $M$
of rank two and $n$, respectively, together with isomorphisms $TM\cong
L(E,F)=E^*\otimes F$ and $\La^2E\cong\La^nF$. Equivalently, these can
be described as first order structures corresponding to subgroup of
$GL(M_{n,2}(\Bbb R))$ described above.

For $n=2$, an almost Grassmannian structure is equivalent to a split
signature conformal structure, and also for $n>2$, almost Grassmannian
structures are similar to conformal structures in several
respects. This is related to the fact that they can be equivalently
described by a canonical Cartan connection with homogeneous model the
Grassmannian of two--planes in $\Bbb R^{n+2}$ viewed as a homogeneous
space of $SL(n+2,\Bbb R)$, see Section 4.1.3 of \cite{book}. Hence
they fall into the class of so--called AHS--structures, a subclass of
parabolic geometries which also contains conformal structures.

Given an almost Grassmannian structure $TM\cong E^*\otimes F$ on $M$,
one obtains an isomorphism $\La^2T^*M\cong
(S^2E\otimes\La^2F^*)\oplus(\La^2E\otimes S^2F^*)$, so there are two
different types of two--forms. In particular, one calls a two--form
\textit{Hermitian} (in the Grassmannian sense) if it lies in
$\La^2E\otimes S^2F^*$. Since $\La^2E$ is a line bundle, a Hermitian
two--form induces a symmetric bilinear form on the bundle $F$
determined up to scale. In particular, in the non--degenerate case,
one can associate to such a two--form a well defined signature $(p,q)$
with $p+q=n$.

In view of this discussion, it is natural to call the PACS--structures
corresponding to the contact gradings of real orthogonal algebras
\textit{of Grassmannian type} and the following result is evident.

\begin{prop}\label{prop3.3}
  A PACS--structure of Grassmannian type on a smooth manifold $M$ of
  dimension $2(p+q)\geq 6$ is given by an almost Grassmannian
  structure $TM\cong E^*\otimes F$ of type $(2,p+q)$ on $M$ together
  with an almost conformally symplectic structure
  $\ell\subset\La^2T^*M$, which is Hermitian in the Grassmannian
  sense, i.e.~contained in the subbundle $\La^2E\otimes S^2F^*$, and
  has signature $(p,q)$.
\end{prop}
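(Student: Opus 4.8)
The plan is to proceed exactly as in the proof of Proposition \ref{prop3.2}, decomposing the $G_0$--reduction into the two pieces prescribed by the way $G_0$ sits inside $GL(\frak g_{-1})$. Recall that $G_0$ was defined as the intersection of $CSp(\frak g_{-1})$ with the subgroup $H\subset GL(\frak g_{-1})$ of maps preserving the tensor product decomposition $\frak g_{-1}\cong\Bbb R^{2*}\otimes\Bbb R^n$, where $H$ is the image of the natural homomorphism $GL(2,\Bbb R)\x GL(n,\Bbb R)\to GL(\frak g_{-1})$. First I would observe that a first order structure with structure group $G_0$ is a reduction of the full frame bundle of $M$ to $G_0$, and that giving such a reduction is the same as giving a reduction to $H$ together with a reduction to $CSp(\frak g_{-1})$ whose intersection inside the frame bundle is a principal subbundle with structure group $G_0=H\cap CSp(\frak g_{-1})$.

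The first of these two reductions is, by definition and the standard theory recalled in Section \ref{3.3}, precisely an almost Grassmannian structure of type $(2,n)$: it produces the auxiliary bundles $E$ and $F$ of ranks $2$ and $n$, respectively, together with the isomorphisms $TM\cong E^*\otimes F$ and $\La^2E\cong\La^nF$. On the other hand, by Proposition \ref{prop2.3}(1), the reduction to $CSp(\frak g_{-1})$ is exactly an almost conformally symplectic structure $\ell\subset\La^2T^*M$.

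It then remains to read off the compatibility condition. Using $TM\cong E^*\otimes F$, the bundle $\La^2T^*M$ splits as $(S^2E\otimes\La^2F^*)\oplus(\La^2E\otimes S^2F^*)$, which is the associated--bundle version of the $\frak g_0$--decomposition of $\La^2(\frak g_{-1})^*$ computed in Section \ref{3.3}. Since the line spanned by the defining element $b$ lies in the second summand and the induced inner product on $\Bbb R^n$ has signature $(p,q)$, the combined reduction lands in $G_0$ precisely when $\ell$ is contained in $\La^2E\otimes S^2F^*$ and the associated symmetric form on $F$ has signature $(p,q)$; this is exactly the Hermitian condition in the Grassmannian sense. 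Conversely, an almost Grassmannian structure together with such a line $\ell$ yields two reductions whose intersection is a $G_0$--subbundle, so the two descriptions are equivalent.

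Since all the relevant representation theory was already carried out in Section \ref{3.3}, I do not expect a genuine difficulty here, which is why the statement can be called evident. The only points needing a little care are bookkeeping of structure groups---verifying that the intersection of the $H$-- and $CSp(\frak g_{-1})$--reductions inside the frame bundle really is a smooth principal $G_0$--subbundle, and nothing larger---and checking that the signature of the symmetric form on $F$ is a well--defined invariant of $\ell$ in spite of the conformal scaling freedom in $CSp(\frak g_{-1})$. Both are routine once the two component structures are in place.
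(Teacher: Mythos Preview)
Your proposal is correct and matches the paper's own treatment: the paper simply declares the result evident from the representation--theoretic setup in Section~\ref{3.3} and gives no separate proof, and what you have written is exactly the unpacking of that evidence via the description of $G_0$ as $H\cap CSp(\frak g_{-1})$. The only minor comment is that your phrasing ``giving such a reduction is the same as giving a reduction to $H$ together with a reduction to $CSp(\frak g_{-1})$ whose intersection \dots'' is slightly awkward; it is cleaner to say that a $G_0$--reduction extends to both an $H$--reduction and a $CSp(\frak g_{-1})$--reduction, and conversely compatible reductions to $H$ and $CSp(\frak g_{-1})$ intersect in a $G_0$--reduction---but you already flag this bookkeeping point yourself.
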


\begin{remark}\label{rem3.3}
For comparison with the quaternionic case to be discussed below, we
note that these almost Grassmannian structures admit an alternative
description. Given an almost Grassmannian structure $TM\cong
E^*\otimes F$, any endomorphism of $E$ induces an endomorphism of
$TM$. For each $x\in M$, the space $L(T_xM,T_xM)$ naturally is an
associative algebra under composition, and we obtain an inclusion of
$L(E_x,E_x)\cong M_2(\Bbb R)$ as a subalgebra. These clearly fit
together to define a bundle of subalgebras in the locally trivial
bundle $L(TM,TM)$ of (associative) algebras. 

Since the algebra $M_2(\Bbb R)$ is the unique 4--dimensional normed
real algebra with indefinite quadratic form (given by the
determinant), it is also called the algebra of \textit{split
  quaternions}. Conversely, given such a bundle of subalgebras, one
can use the fact that $M_2(\Bbb R)$ is a simple algebra to recover a
tensor product decomposition of $TM$. Hence almost Grassmannian
structures of type $(2,n)$ are also called \textit{almost split
  quaternionic structures}.
\end{remark}

\subsection{PACS--structures of quaternionic type}\label{3.4}
As mentioned in Section \ref{3.3}, there is another real form of the even
orthogonal Lie algebras $\frak{so}(2n,\Bbb C)$, which admits a contact
grading. This is usually denoted by $\frak{so}^*(2n)$ and is related
to skew--Hermitian forms on quaternionic vector spaces. We will
discuss this rather briefly here and more thoroughly in the second
part of this series in the context of contactification. 

Recall that the concept of Hermitian forms makes sense over the
quaternions. A (quaternionically) Hermitian form on a (right) vector
space $V$ over the skew--field $\Bbb H$ of quaternions is defined as a
real bilinear map $b:V\x V\to\Bbb H$ such that for $v,w\in V$ and
$q\in\Bbb H$, one has $b(vq,w)=\overline{q}b(v,w)$, $b(v,wq)=b(v,w)q$
and $b(w,v)=\overline{b(v,w)}$. Here the bar denotes the usual
conjugation on $\Bbb H$. Likewise, a (quaternionically)
\textit{skew--Hermitian form} on $V$ is defined a real bilinear map
$\om:V\x V\to\Bbb H$ such that $\om(vq,w)=\overline{q}\om(v,w)$,
$\om(v,wq)=\om(v,w)q$ and $\om(w,v)=-\overline{\om(v,w)}$. (The analog
of this concept also exists over the complex numbers, but it is not
studied there, since multiplication by $i$ defines an isomorphism
between Hermitian and skew--Hermitian forms in the complex case.)

Similarly as for quaternionically Hermitian forms, a skew--Hermitian
form on $V$ can be recovered from its real part, which now is a
skew--symmetric real valued bilinear form on $V$ which is preserved by
multiplication by all unit quaternions. Conversely, any such bilinear
form can be uniquely extended to a quaternionically skew--Hermitian
form.

It turns out that on any finite dimensional quaternionic vector space
there is a unique (up to quaternionically linear automorphisms)
non--degenerate quaternionically skew--Hermitian form. Hence we can
choose one such form $\om$ on $\Bbb H^n$ and define $SO^*(2n)$ as the
group of all quaternionically linear automorphisms $A$ of $\Bbb H^n$
such that $\om(Av,Aw)=\om(v,w)$ for all $v,w\in\Bbb H^n$. This clearly
is a closed subgroup of $GL(n,\Bbb H)$ and hence a Lie group, and we
denote by $\mathfrak{so}^*(2n)$ its Lie algebra. In the definition of
$SO^*(2n)$, one may equivalently replace $\om$ by its real part, which
shows that, $SO^*(2n)$ is the intersection in $GL(4n,\Bbb R)$ of
$GL(n,\Bbb H)$ with the stabilizer of the real part of $\om$, which is
isomorphic to $Sp(4n,\Bbb R)$. 

It is easy to verify explicitly that $\mathfrak{so}^*(2n)$ is a real
form of the complex orthogonal algebra $\mathfrak{so}(2n,\Bbb
C)$. While for $n=2,3,4$, this is isomorphic to real forms as
discussed in Section \ref{3.3} above, one obtains a genuinely different real
form for $n\geq 5$. The following description of the associated
PACS--structure is also valid for $n=2,3,4$. 

The situation here is closely parallel to the Grassmannian case
treated in Section \ref{3.3}. Starting from $\frak g=\frak{so}^*(2n+2)$, we
obtain $\frak g_{-1}\cong\Bbb H^n$ and the bracket $[\ ,\ ]:\frak
g_{-1}\x\frak g_{-1}\to\frak g_{-2}$ is the real part of a
quaternionically skew--Hermitian form. The Lie subalgebra $\frak
g_0\subset\frak{csp}(\frak g_{-1})$ is isomorphic to $\Bbb
H\oplus\frak{so}^*(\frak g_{-1})$ with the first factor acting via
quaternionic scalar multiplications (which are not quaternionically
linear since $\Bbb H$ is non--commutative). As a natural choice for
$G_0$ we can then use the subgroup $\Bbb H^*\cdot SO^*(\frak
g_{-1})\subset CSp(\frak g_{-1})$ with the first factor acting by
scalar multiplications.

Now it is well known that a reduction of structure group of a manifold
of dimension $4n$ to the subgroup $\Bbb H^*\cdot GL(n,\Bbb H)\subset
GL(4n,\Bbb R)$ which is generated by quaternionic scalar
multiplications and quaternionically linear automorphisms of $\Bbb
H^n$ is equivalent to an \textit{almost quaternionic structure}. An
almost quaternionic structure can be equivalently described as a
bundle of subalgebras in the bundle $L(TM,TM)$ of associative algebras
with modeling algebra $\Bbb H$. A more traditional equivalent
description is as a rank--three subbundle of $L(TM,TM)$ which can
locally be spanned by three almost complex structures $I$, $J$, and
$K$ which satisfy the quaternion relations. In view of this, we call
the resulting PACS--structure \textit{of quaternionic type}, and the
following result is evident.

\begin{prop}\label{prop3.4}
  A PACS--structure of quaternionic type on a smooth manifold $M$ of
  dimension $4n\geq 8$ is given by an almost quaternionic structure on
  $M$ together with an almost conformally symplectic structure
  $\ell\subset\La^2T^*M$, such that each non--zero element of $\ell$
  is the real part of a quaternionically skew--Hermitian form.
\end{prop}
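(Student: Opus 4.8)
The strategy is to follow the pattern of Propositions \ref{prop3.2} and \ref{prop3.3} and to identify a first order $G_0$--structure with a pair of reductions of the frame bundle subject to a compatibility condition. The almost quaternionic structure corresponds to a reduction to the subgroup $\Bbb H^*\cdot GL(n,\Bbb H)\subset GL(\frak g_{-1})$ generated by the right scalar multiplications $R_q:v\mapsto vq$ and the quaternionically linear maps, while the almost conformally symplectic structure $\ell$ corresponds to a reduction to $CSp(\frak g_{-1})$. The content of the proposition is then that these two reductions, when compatible, combine to a reduction to a group with Lie algebra $\frak g_0=\Bbb H\oplus\frak{so}^*(\frak g_{-1})$, the compatibility being exactly that every nonzero $\tau\in\ell_x$ is the real part of a quaternionically skew--Hermitian form for the quaternionic structure at $x$.

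First I would record what the compatibility means pointwise: a real two--form $\tau$ is the real part of a skew--Hermitian form precisely when it is invariant under all the unit quaternions determined by the almost quaternionic structure, i.e.~when $\tau(A\xi,A\eta)=\tau(\xi,\eta)$ for every $A$ in the three--sphere spanned by a local frame $I,J,K$. I would then verify the inclusion $G_0\subset(\Bbb H^*\cdot GL(n,\Bbb H))\cap CSp(\frak g_{-1})$. Indeed, as recalled before the proposition, $SO^*(\frak g_{-1})=GL(n,\Bbb H)\cap Sp(\frak g_{-1})$, so $SO^*(\frak g_{-1})\subset Sp(\frak g_{-1})\subset CSp(\frak g_{-1})$; and a short computation using $\om(vq,w)=\bar q\,\om(v,w)$ and $\om(v,wq)=\om(v,w)q$ gives $\mathrm{Re}\,\bigl(\bar q\,\om(v,w)\,q\bigr)=|q|^2\,\mathrm{Re}\,\om(v,w)$, so the right scalar multiplication $R_q$ lies in $CSp(\frak g_{-1})$ with conformal factor $|q|^2>0$. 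Hence $G_0=\Bbb H^*\cdot SO^*(\frak g_{-1})$ preserves both structures.

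For the converse I would take $A$ in the stabilizer of both structures and write $A=R_q\circ B$ with $B$ quaternionically linear, using that right scalar multiplications commute with quaternionically linear maps. Since $R_q\in CSp(\frak g_{-1})$, also $B\in CSp(\frak g_{-1})$, so $B$ scales the real part $b=\mathrm{Re}\,\om$ by some factor $\mu\in\Bbb R^*$; because $B$ commutes with the right $\Bbb H$--action and the components of $\om$ are recovered from $b$ through that action, $B$ in fact scales the whole form $\om$ by $\mu$. When $\mu>0$ one absorbs $\sqrt\mu$ into a real scalar and lands in $SO^*(\frak g_{-1})$, giving $A\in G_0$; the signature statement is automatic since the non--degenerate skew--Hermitian form on $\Bbb H^n$ is unique up to isomorphism.

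The main obstacle is exactly the sign of $\mu$. The factor need not be positive: left multiplication by the scalar matrix $jI$ is quaternionically linear, preserves the quaternionic structure, and scales $\om$ (hence $b$) by $-1$, so it preserves $\ell$ yet does not lie in $\Bbb H^*\cdot SO^*(\frak g_{-1})$. Thus the stabilizer of the compatible pair is a priori the slightly larger group $\tilde G_0:=(\Bbb H^*\cdot GL(n,\Bbb H))\cap CSp(\frak g_{-1})$, which differs from $G_0$ only in its component group (the sign of $\mu$), both having Lie algebra $\frak g_0$. Since $\tilde G_0$ carries the representation to $CSp(\frak g_{-1})$ and has Lie algebra $\frak g_0$, it is an admissible structure group in the sense of Definition \ref{def3.1}; with this choice the pair of compatible structures is exactly a PACS--structure of quaternionic type, and isolating the original group $G_0$ would require one additional discrete normalization fixing the sign of the conformal factor.
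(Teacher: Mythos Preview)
The paper gives no proof of this proposition: after setting up the Lie--algebraic description of $\frak g_0$ and identifying $G_0=\Bbb H^*\cdot SO^*(\frak g_{-1})$, it simply says ``the following result is evident,'' exactly as for Proposition \ref{prop3.3}. Your argument supplies the details the paper omits, and the structure you follow --- identify the two separate reductions, verify $G_0$ lies in their intersection, then show the intersection has Lie algebra $\frak g_0$ --- is the right one.

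Your observation about the component group is a genuine subtlety that the paper glosses over. Your example is correct: for a standard skew--Hermitian form such as $\om(v,w)=\sum_\al\bar v_\al\, i\, w_\al$, left multiplication by $j$ is quaternionically linear, normalizes the right $\Bbb H$--action (hence preserves the almost quaternionic structure), and sends $\om$ to $-\om$, so it lies in $(\Bbb H^*\cdot GL(n,\Bbb H))\cap CSp(\frak g_{-1})$ but not in $\Bbb H^*\cdot SO^*(\frak g_{-1})$. More generally, since any two non--degenerate skew--Hermitian forms are equivalent under $GL(n,\Bbb H)$, the form $-\om$ is always in the $GL(n,\Bbb H)$--orbit of $\om$, so such an element exists for every choice of $\om$. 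Hence the full stabilizer $\tilde G_0$ is an index--two extension of $G_0$ with the same Lie algebra.

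Your resolution is also the right one in the paper's own framework: Section \ref{3.1} explicitly allows any group $G_0$ with Lie algebra $\frak g_0$ for which the representation on $\frak g_{-1}$ integrates to a homomorphism into $CSp(\frak g_{-1})$, and says that the specific group described in each case is only ``one natural choice.'' So the geometric description in the proposition is exact for the structure group $\tilde G_0$, and differs from the paper's $G_0$ only by a discrete orientation--type datum (the sign of the conformal factor on $\ell$). This is entirely analogous to the familiar ambiguity between $O$-- and $SO$--structures and does not affect anything in the rest of the paper. Your proof is correct and more careful than the paper's treatment.
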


\begin{example}\label{ex3.4}
  Let us give an example of a homogeneous torsion--free PCS--structure
  of quaternionic type. This example will be discussed in more detail
  in the context of contactification in the next part of this series.

  It is well known that the complex Grassmannian $M:=Gr(2,\Bbb
  C^{n+2})$, viewed as a homogeneous space of $SU(n+2)$ is a very
  remarkable example of a symmetric space. This is due to the fact
  that it carries an invariant complex structure $J$ and an invariant
  quaternionic structure $Q$, with $J$ not being contained in
  $Q$. Moreover, there is an invariant Riemannian metric $g$ on $M$
  which is K\"ahler with respect to $J$ and quaternion--K\"ahler with
  respect to $Q$.  From Proposition \ref{prop3.2} we know that the
  K\"ahler metric $g$ determines a PCS--structure of K\"ahler type on
  $M$. It turns out that $\omega$ defines a second PCS--structure on
  $M$.

  \begin{cor}\label{cor3.4}
    Let $M$ be the symmetric space $SU(n+2)/S(U(n)\x U(2))$, let $\Cal
    Q$ be the invariant quaternionic structure on $M$ and let $\omega$
    be the K\"ahler form of the invariant K\"ahler metric on $M$. Then
    $\omega$ defines a PCS--structure of quaternionic type on $(M,\Cal
    Q)$.
  \end{cor}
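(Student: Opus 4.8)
The plan is to verify that the K\"ahler form $\omega$ of the invariant metric satisfies the two conditions needed for a PCS--structure of quaternionic type, as characterized in Proposition \ref{prop3.4}, together with the PCS--condition from Definition \ref{def3.1}(2). Concretely, I must produce a line subbundle $\ell\subset\La^2T^*M$, check that each nonzero element of $\ell$ is the real part of a quaternionically skew--Hermitian form relative to the quaternionic structure $\Cal Q$, and then verify that $\ell$ is conformally symplectic. The natural candidate for $\ell$ is the line bundle spanned by $\omega$ itself, which makes sense since $\omega$ is invariant and nowhere vanishing.

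First I would set up the linear--algebraic picture at a single point, exploiting homogeneity. At the base point $o=eS(U(n)\x U(2))$, the tangent space $T_oM$ carries the complex structure $J$, the three--dimensional space $\Cal Q_o$ spanned by almost complex structures satisfying the quaternion relations, and the value $\omega(o)$ of the K\"ahler form. Because $M$ is an isotropy irreducible (indeed symmetric) space and all three structures $g$, $J$, $\Cal Q$ are invariant, it suffices to carry out the verification at $o$ and then invoke invariance under $SU(n+2)$ to spread it over all of $M$. The key computation is to show that $\omega(o)$, viewed as a skew--symmetric bilinear form on $T_oM$, is invariant under multiplication by every unit quaternion in $\Cal Q_o$; by the discussion preceding Proposition \ref{prop3.4}, this is exactly the condition that $\omega(o)$ be the real part of a quaternionically skew--Hermitian form. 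Here I would use the representation--theoretic description of $T_oM$ as a tensor product $\Bbb C^n\otimes\Bbb C^2$ (matching the Grassmannian/quaternionic pattern from Sections \ref{3.3} and \ref{3.4}), under which $J$, the quaternionic structures, and the metric all have explicit standard forms, so that the invariance of $\omega$ under $\Cal Q_o$ becomes a direct check.

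For the conformally symplectic condition, I would argue that $\omega$ is closed: it is the K\"ahler form of a K\"ahler metric, hence $d\omega=0$ by definition. Thus $\omega$ is a nowhere--vanishing closed section of $\ell$, and by Definition \ref{def2.1}(2) the underlying almost conformally symplectic structure is genuinely conformally symplectic. Combined with the pointwise verification that each nonzero multiple of $\omega$ is the real part of a skew--Hermitian form, Proposition \ref{prop3.4} then identifies the pair $(\Cal Q,\ell)$ as a PACS--structure of quaternionic type, and closedness of $\omega$ upgrades this to a PCS--structure.

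The main obstacle I anticipate is the compatibility check between $\omega$ and the quaternionic structure $\Cal Q$, i.e.\ verifying that $\omega$ is invariant under the full three--sphere of unit quaternions in $\Cal Q_o$ rather than merely under the single complex structure $J$ coming from the K\"ahler structure. Since the problem explicitly notes that $J$ is \emph{not} contained in $Q$, the relevant quaternionic structure is the invariant $\Cal Q$ that is genuinely independent of $J$, and one must confirm that the same two--form $\omega$ is skew--Hermitian for $\Cal Q$ as well. The cleanest route is to use the explicit matrix model of $\frak{su}(n+2)$ with its symmetric decomposition, identify $J$, the generators of $\Cal Q_o$, and $\omega(o)$ concretely, and then verify the invariance by a short computation; invariance under $SU(n+2)$ then finishes the argument globally.
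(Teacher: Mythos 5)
Your proposal is correct and follows essentially the same route as the paper: reduce by invariance to the tangent space at the base point, model it as complex $n\times 2$ matrices (equivalently $\Bbb C^{2*}\otimes\Bbb C^n$), check by direct computation that the Hermitian inner product $\tr(X^*Y)$ --- and hence its imaginary part $\om$ --- is preserved by the unit quaternions generating $\Cal Q$, and obtain the PCS--condition from closedness of the K\"ahler form. You also correctly isolate the one genuine point of the argument, namely compatibility of $\om$ with all of $\Cal Q$ rather than with $J$ alone.
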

  \begin{proof}
    We use the description of invariant structures of homogeneous
    spaces as developed in Section 1.4 of \cite{book}. The symmetric
    decomposition of $\frak g=\frak{su}(n + 2)$ as $\frak h\oplus\frak
    m$ can be seen by representing matrices in block form with blocks
    of size $2$ and $n$ as $\left(\begin{smallmatrix} A & -X^* \\ X &
      B\end{smallmatrix}\right)$. Here $A\in\frak u(2)$ and $B\in\frak
      u(n)$ are such that $\tr(A)+\tr(B)=0$ and form the $\frak
      h$-–part, while $X$ is a complex $(n\x 2)$-–matrix, which forms
      the $\frak m$--part. The action of the subgroup $H\cong S(U(n)\x
      U(2))$ corresponding to $\frak h$ comes from multiplying $X$ by
      unitary matrices from both sides. Since this action is complex
      linear for the usual multiplication by $i$, this multiplication
      gives rise to an invariant almost complex structure $J$ on $M$.

      On the other hand, multiplication from the right by elements of
      $\frak{su}(2)$ defines a three--dimensional subspace $Q\subset
      L_{\Bbb C}(\frak m,\frak m)$, which is $H$--invariant. The
      induced action of $H$ on $Q$ is via the adjoint action of
      $SU(2)\subset H$ on $\frak{su}(2)\subset \frak h$. Hence $Q$
      gives rise to a three dimensional smooth subbundle $\Cal
      Q\subset L(TM,TM)$. It is well known that $\frak{su}(2)$ is
      isomorphic to the imaginary quaternions, so $\Cal Q$ defines
      invariant almost quaternionic structure on $M$.

      Now the standard Hermitian inner product on the complex vector
      space $\frak m$ can be written as $(X,Y)\mapsto\tr(X^*Y)$.  This
      is Hermitian with respect to multiplication by $i$, so its real
      part extends to an invariant Riemannian metric $g$ on $M$,
      which is Hermitian with respect to $J$. Since by construction
      both $J$ and $g$ are parallel for the canonical connection on
      $M$ , this defines an invariant K\"ahler structure whose
      K\"ahler form $\om$ is induced by the imaginary part of the
      Hermitian inner product.  

      But now the imaginary unit quaternions correspond to those
      elements $A\in\frak{su}(2)$ for which $A^*=A^{-1}=-A$ or
      equivalently $A^2=-\id$. But for such an element, the Hermitian
      inner product satisfies
      $(AX,AY)\mapsto\tr(X^*A^{-1}AY)=\tr(X^*Y)$. Hence the real part
      of this inner product is Hermitian in the quaternionic sense, so
      the metric g is quaternion–-K\"ahler with respect to
      $\Cal Q$. Likewise, the imaginary part is Hermitian in the
      quaternionic sense, so $\om$ defines a PCS-–structure of
      quaternionic type with respect to $\Cal Q$.
  \end{proof}
\end{example}

\subsection{Exceptional PACS--structures}\label{3.5}
The contact gradings of real forms of the exceptional Lie algebras are
discussed in Section 4.2.8 of \cite{book}. In each case, one obtains a
reductive Lie algebra $\frak g_0$ with one--dimensional center
together with an irreducible representation on $\frak g_{-1}$. All
these representations have the property that their second exterior
square contains a unique one--dimensional invariant subspace (and in
fact only one more irreducible component). Hence any reductions of
structure group of a manifold of dimension $\dim(\frak g_{-1})$
automatically gives rise to an almost conformally symplectic structure
(corresponding to the one--dimensional summand). In the exceptional
cases it thus seems less appropriate to us to view a PACS--structure
as an almost conformally symplectic structure plus some additional
data. Rather than that one should view this as first order structures
which carry an underlying almost conformally symplectic structure.
The resulting algebras and representations are collected in the
following table.

\medskip

\begin{tabular}{|l|l|l|l|l|}
\hline type & $\frak g_0$ & $\frak g_{-1}$ & $\dim(\frak
g_{-1})$ & further real forms \\
\hline
$G_2$ & $\frak{gl}(2,\Bbb R)$ & $S^3\Bbb R^2$ & 4 & --\\
\hline
$F_4$ & $\frak{csp}(6,\Bbb R)$ & $\La^3_0\Bbb R^6$ & 14 & -- \\
\hline
$E_6$ & $\frak{gl}(6,\Bbb R)$  & $\La^3\Bbb R^6$ & 20 &
$\frak{csu}(3,3), \frak{csu(5,1)}$ \\
\hline
$E_7$ & $\frak{cso}(6,6)$ & Spin & 32 & $\frak{cso}(4,8)$,
$\frak{cso}^*(12)$ \\ 
\hline
$E_8$ & split $\frak e_7$ &  & 56 & one more \\
\hline 
\end{tabular}

\medskip

For the $F_4$--case, one has to view $\Bbb R^6$ as a symplectic vector
space and then $\La^3_0\Bbb R^6$ denotes the tracefree part in the
third exterior power. For the $F_4$-- and $E_6$--cases, the underlying
almost conformally symplectic structure comes from the wedge product
$\La^3\Bbb R^6\x\La^3\Bbb R^6\to\La^6\Bbb R^6$. For the two
conformally unitary algebras showing up in the $E_6$--case, one has to
use appropriate real subrepresentations in $\La^3\Bbb C^6$, and
likewise in the $E_7$--case, one needs real spin representations
(which restricts the available signatures). To our knowledge, there is
no established name for the $56$--dimensional representation of $\frak
e_7$, it is the irreducible representation of lowest dimension of this
algebra. The two real forms which show up here are the ones for which
the complex representation of dimension $56$ admits a real form.

In the cases associated to $G_2$, $F_4$ and $E_6$, one can describe
the reductions of structure group more explicitly. For the
PACS--structure determined by $G_2$ one has a manifold of dimension
$4$, together with an identification $TM\cong S^3E$ for an auxiliary
rank--two bundle $E$. Likewise the PACS--structures corresponding to
$F_4$ and $E_6$ can be described in terms of an auxiliary bundle $E$
of rank $6$ (endowed with a symplectic form in the $F_4$--case). For
the remaining two cases, finding a similar description would first
require a characterization of the $\frak g_0$--representations in
question.

\section{Canonical connections}\label{4}

We will next analyze the PACS--structures introduced in Section
\ref{3} from the point of view of compatible connections on
G--structures as discussed in Section \ref{2.2}. We will prove that
for all the subalgebras $\frak g_0\subset\frak{gl}(\frak g_{-1})$
coming from contact gradings of simple Lie algebras of type different
from $C_n$, the first prolongation (in the sense of Sternberg as
introduced in Section \ref{2.2}) vanishes. Moreover, there is a
natural choice of a normalization condition, so that any
PACS--structure gives rise to a canonical linear connection on the
tangent bundle. The form of the resulting intrinsic torsion can be
described explicitly for each of the structures. The essential tool
for proving all these results is Kostant's theorem (see \cite{Kostant}
or the discussion in Section 3.3 of \cite{book}) and its applications
to parabolic contact structures.

\subsection{Kostant's Theorem}\label{4.1}
Let $\frak g$ be a simple Lie algebra endowed with a contact grading
$\frak g=\frak g_{-2}\oplus\frak g_{-1}\oplus\frak g_0\oplus\frak
g_1\oplus\frak g_2$ as introduced in Section\ref{3.1}. Then $\frak g_-:=\frak
g_{-2}\oplus\frak g_{-1}$ is a nilpotent subalgebra of $\frak g$ (a
Heisenberg algebra), so $\frak g$ is naturally a module over $\frak
g_{-}$. Then there is the standard complex for computing the Lie
algebra cohomology spaces $H^*(\frak g_-,\frak g)$, consisting of the
chain spaces $C^k(\frak g_-,\frak g):=\La^k(\frak g_-)^*\otimes\frak
g$ and differentials $\partial_K:C^k(\frak g_-,\frak g)\to
C^{k+1}(\frak g_-,\frak g)$. Viewing $C^k(\frak g_-,\frak g)$ as the
space of alternating multilinear maps, this differential is given by
\begin{equation*}
\begin{aligned}
\partial_K\ph(X_0&,\dots,X_k):=\sum_{i=0}^k(-1)^i[X_i, 
\ph(X_0,\dots,\widehat{X_i},\dots,X_k)]\\
&+\sum_{i<j}(-1)^{i+j}
\ph([X_i,X_j],X_0,\dots,\widehat{X_i},\dots,\widehat{X_j},\dots,X_k), 
\end{aligned}
\end{equation*}
where the Lie brackets are in $\frak g$.  (The subscript is chosen to
distinguish this ``Kostant--differential'' from the Spencer
differential introduced in Section \ref{2.2}, which we will denote by
$\partial_S$ in what follows.)

Observe that there is a natural notion of homogeneity for multilinear
maps mapping $\frak g_{-}$ to $\frak g$, and $\partial_K$ preserves
homogeneities (since the Lie brackets do). Consequently, one can split
the whole complex $(C^*(\frak g_-,\frak g),\partial_K)$ and thus also
its cohomology according to homogeneity. Moreover, the grading
property implies that the restriction of the adjoint action to $\frak
g_0$ preserves each grading component $\frak g_i$. Hence all the
spaces $C^k(\frak g_-,\frak g)$ are $\frak g_0$--modules and one
easily verifies that $\partial_K$ is $\frak g_0$--equivariant. Thus,
each of the cohomology spaces naturally is a representation of $\frak
g_0$, and this structure is crucial for the description of the
cohomology given in Kostant's theorem.

The first step towards proving Kostant's theorem is the construction
of an adjoint $\partial^*:C^k(\frak g_-,\frak g)\to C^{k-1}(\frak
g_-,\frak g)$ to $\partial_K$, usually called the \textit{Kostant
  codifferential}. Using this, one defines the \textit{Kostant
  Laplacian} $\square:=\partial^*\partial_K+\partial_K\partial^*$
which maps $C^k(\frak g_-,\frak g)$ to itself. The adjointness of
$\partial^*$ and $\partial_K$ leads to an algebraic Hodge
decomposition, which in particular implies that $H^k(\frak g_-,\frak
g)\cong\ker(\square)\subset C^k(\frak g_-,\frak g)$ as a $\frak
g_0$--module and elements in $\ker(\square)$ are referred to as
``harmonic''. Now Kostant's theorem describes $\ker(\square)$ as a
representation of $\frak g_0$. It turns out that this representations
is simply reducible, i.e.~a direct sum of pairwise non--isomorphic
irreducible representations. The highest weights of each of these
irreducible components as well as the cohomology degree in which it is
contained can be computed in terms of the Weyl group of $\frak
g$. This computation is completely algorithmic. We will not discuss
the general result but just use the available information on the
cohomology groups as it is needed.

\subsection{Maximality}\label{4.2}
The first result we deduce is at least implicitly in
\cite{Cahen-Schwachhoefer} and it is based on the description of
$H^1(\frak g_-,\frak g)$ obtained from Kostant's theorem. 

\begin{prop}\label{prop4.2}
Suppose that $\frak g$ is not of type $A_n$ or $C_n$. Then the
subalgebra $\frak g_0\subset\frak{csp}(\frak g_{-1})$ coming from the
contact grading of $\frak g$ is a maximal subalgebra. Indeed, any
$\frak g_0$--invariant subspace of $\frak{csp}(\frak g_{-1})$ strictly
containing $\frak g_0$ coincides with $\frak{csp}(\frak g_{-1})$.
\end{prop}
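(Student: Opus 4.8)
The plan is to recast both assertions as a single irreducibility statement and then to read off that irreducibility from Kostant's theorem. A subalgebra $\frak h$ with $\frak g_0\subseteq\frak h\subseteq\frak{csp}(\frak g_{-1})$ is in particular a $\frak g_0$--invariant subspace, so the ``Indeed'' clause implies maximality; and a $\frak g_0$--invariant subspace strictly between $\frak g_0$ and $\frak{csp}(\frak g_{-1})$ is the same thing as a nonzero proper $\frak g_0$--submodule of the quotient module $\frak q:=\frak{csp}(\frak g_{-1})/\frak g_0$. Hence everything follows once I show that $\frak q$ is an irreducible $\frak g_0$--module. Note that $\frak q\neq 0$ precisely because $\frak g$ is not of type $C_n$ (that being the one case in which $\frak g_0$ already exhausts $\frak{csp}(\frak g_{-1})$).

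The key step is to produce a $\frak g_0$--equivariant injection of $\frak q$ into a Kostant cohomology group. First I identify $\frak{csp}(\frak g_{-1})$ cohomologically: a homogeneity--zero derivation of the Heisenberg algebra $\frak g_-$ is given by a map $D_1\in\frak{gl}(\frak g_{-1})$ together with a scalar on the line $\frak g_{-2}$, and the derivation property along the bracket $\La^2\frak g_{-1}\to\frak g_{-2}$ is exactly the condition $D_1\in\frak{csp}(\frak g_{-1})$. Since there are no nonzero inner derivations in homogeneity zero, this gives a $\frak g_0$--equivariant isomorphism $H^1(\frak g_-,\frak g_-)_0\cong\frak{csp}(\frak g_{-1})$, where the subscript denotes the homogeneity--zero part. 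Next I apply the long exact sequence in $H^*(\frak g_-,-)$ associated to the short exact sequence of $\frak g_-$--modules $0\to\frak g_-\to\frak g\to\frak g/\frak g_-\to 0$. In homogeneity zero one checks that $H^0(\frak g_-,\frak g/\frak g_-)_0=\frak g_0$ and that the connecting map $\frak g_0\to H^1(\frak g_-,\frak g_-)_0=\frak{csp}(\frak g_{-1})$ is precisely the inclusion coming from the contact grading (it sends $A\in\frak g_0$ to the class of $\ad(A)|_{\frak g_-}$). Exactness then yields the desired injection $\frak q=\frak{csp}(\frak g_{-1})/\frak g_0\hookrightarrow H^1(\frak g_-,\frak g)_0$.

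Finally I invoke Kostant's theorem for the contact grading of $\frak g$. The number of irreducible components of $H^1(\frak g_-,\frak g)$ equals the number of simple roots crossed in the parabolic defining the grading; a contact grading crosses a single node exactly when $\frak g$ is not of type $A_n$ (the $A_n$ contact grading crosses two nodes, which is also why $\frak g_0$ then carries a two--dimensional centre and $\frak q$ becomes reducible). Thus, for $\frak g$ neither of type $A_n$ nor $C_n$, Kostant's theorem gives that $H^1(\frak g_-,\frak g)$ is a single irreducible $\frak g_0$--module. A nonzero submodule of an irreducible module is everything, so the injection of the nonzero module $\frak q$ forces $\frak q\cong H^1(\frak g_-,\frak g)$; in particular $\frak q$ is irreducible, which is what we wanted. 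The main obstacle is the homogeneity bookkeeping in the long exact sequence together with the precise input from Kostant's theorem: one must be sure that the component count of $H^1(\frak g_-,\frak g)$ is as claimed and that the single component actually meets homogeneity zero --- the latter being automatic here, since the nonzero $\frak q$ already injects into $H^1(\frak g_-,\frak g)_0$.
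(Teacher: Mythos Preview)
Your proof is correct and follows essentially the same strategy as the paper: both reduce to showing that $\frak{csp}(\frak g_{-1})/\frak g_0$ is an irreducible $\frak g_0$--module by identifying it with $H^1(\frak g_-,\frak g)$ in homogeneity zero and invoking Kostant's theorem. The only difference is packaging: the paper computes degree--zero cocycles and coboundaries directly (and cites external results to locate $H^1$ in homogeneity zero), whereas you obtain the injection $\frak q\hookrightarrow H^1(\frak g_-,\frak g)_0$ from the long exact sequence of $0\to\frak g_-\to\frak g\to\frak g/\frak g_-\to 0$ and deduce the homogeneity concentration for free from $\frak q\neq 0$.
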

\begin{proof}
If $\frak g$ is not of type $A_n$, then the parabolic subalgebra
$\frak g_0\oplus\frak g_1\oplus\frak g_2$ is well known to be
maximal (see Section 3.4.2 of \cite{book}). This implies that there is
only one simple reflection in the corresponding Hasse--diagram, so by
Kostant's theorem $H^1(\frak g_-,\frak g)$ is an irreducible
representation. Moreover, if $\frak g$ is not of type $C_n$, then
Propositions 3.3.7 and 4.3.1 of \cite{book} imply that this
irreducible representation must be contained in homogeneity zero. 

So let us assume that $\ph:\frak g_-\to\frak g$ is a linear map which
is homogeneous of degree zero, i.e.~consists of components
$\ph_i:\frak g_i\to\frak g_i$ for $i=-1,-2$. By definition,
$\partial_K\ph=0$ is equivalent to
$\ph_{-2}([X,Y])=[\ph_{-1}(X),Y]-[Y,\ph_{-1}(X)]$. But this exactly
means that $\ph_{-1}\in\frak{csp}(\frak g_{-1})$ and then it uniquely
determines $\ph_{-2}$. On the other hand, the homogeneity--zero part
of $\partial_K:\frak g\to C^1(\frak g_-,\frak g)$ is just
$\operatorname{ad}:\frak g_0\to L(\frak g_-,\frak g_-)$. Thus we
conclude that $H^1(\frak g_-,\frak g)\cong \frak{csp}(\frak
g_{-1})/\frak g_0$ as a $\frak g_0$--module. Since any $\frak
g_0$--invariant subspace in $\frak{csp}(\frak g_{-1})$ strictly
containing $\frak g_0$ descends to a non--zero $\frak g_0$--invariant
subspace of $\frak{csp}(\frak g_{-1})/\frak g_0$, irreducibility of
$H^1(\frak g_-,\frak g)$ implies the claim.
\end{proof}

The main point about this result is that it adds to the perspective on
PACS--structures. Since an almost conformally symplectic structure
certainly does not determine a unique linear connection on the tangent
bundle, it tells us that there are no intermediate structures between
an almost conformally symplectic structure and a PACS--structure,
which could already determine a canonical connection.

From that point of view, it is also interesting to see what happens in
the $A_n$--case. Here the result depends on the real form under
consideration. For $\frak{su}(p+1,q+1)$ it turns out that $H^1(\frak
g_-,\frak g)$ is also irreducible (deducing this from Kostant's
theorem is slightly more complicated, since one has to look at the
complexification for which the cohomology splits into two
irreducibles). Hence in this case, one again obtains a maximal
subalgebra.

On the other hand, for the real form $\frak{sl}(n+2,\Bbb R)$ the
subalgebra $\frak g_0$ is really non--maximal. As we have seen in
Section \ref{3.2}, $\frak g_0\subset\frak{csp}(\frak g_{-1})$ consists of
those maps which preserve a decomposition of $\frak g_{-1}$ into a
direct sum of two Lagrangean subspaces. Now there are two intermediate
subalgebras, which consist of the maps preserving just one of these
two subspaces. The geometric structure these subalgebras correspond to
is of course a conformally symplectic structure together with a
distinguished Lagrangean distribution. But such a structure can
certainly not determine a canonical linear connection on the tangent
bundle of the manifold. Starting from any smooth manifold $N$, the
cotangent bundle $M:=T^*N$ carries such a structure coming from the
canonical symplectic form and the vertical distribution. Now any
diffeomorphism of $N$ induces a diffeomorphism of $M$, which preserves
both the canonical symplectic structure and the vertical subbundle.
Of course, the infinite dimensional group of diffeomorphisms of $N$
cannot preserve a linear connection on $TM$.

\subsection{Vanishing of the first prolongation}\label{4.3} 
To proceed towards the main result of this section, we need a few more
facts related to Kostant's theorem. For this step, the main tool is
the homogeneity--one--component of $H^2(\frak g_{-},\frak g)$. For
$C^2(\frak g_-,\frak g)$, the part of homogeneity one is the direct
sum of the spaces $\La^2\frak g_{-1}^*\otimes\frak g_{-1}$ and $\frak
g_{-2}^*\otimes\frak g_{-1}^*\otimes\frak g_{-2}$. On the one hand,
Kostant's theorem implies that the kernel of $\square$ in homogeneity
one is contained in $\La^2\frak g_{-1}^*\otimes\frak g_{-1}$, see
Lemma 4.2.2 in \cite{book}. On the other hand, the Lie bracket
identifies $\frak g_{-1}$ with $\frak g_{-1}^*\otimes\frak g_{-2}$,
and hence we can naturally view $\La^3\frak g_{-1}^*\otimes\frak
g_{-2}$ as a subspace in $\La^2\frak g_{-1}^*\otimes\frak g_{-1}$. The
same then applies to the tracefree part, and using this, we can now
formulate

\begin{thm}\label{thm4.3}
Let $\frak g$ be a simple Lie algebra, which is not of type $C_n$ and
admits a contact grading, and let $\frak g_0\subset\frak{csp}(\frak
g_{-1})\subset\frak{gl}(\frak g_{-1})$ be the corresponding
inclusion. 

Then the Spencer differential $\partial_S:\frak g_{-1}^*\otimes\frak
g_0\to \La^2\frak g_{-1}^*\otimes\frak g_{-1}$ is
injective. Moreover, the subspaces $\ker(\square)$ and $\La^3_0\frak
g_{-1}^*\otimes\frak g_{-2}$ of $\La^2\frak g_{-1}^*\otimes\frak
g_{-1}$ have zero intersection and their direct sum is a linear
complement to $\Im(\partial_S)$. 
\end{thm}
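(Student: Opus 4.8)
The plan is to translate the statement about the Spencer differential $\partial_S$ into a statement about the Kostant differential $\partial_K$ in homogeneity one, and then read off the conclusion from Kostant's theorem together with the already-computed structure of the homogeneity-one chain spaces. The key observation is that the Spencer differential $\partial_S:\frak g_{-1}^*\otimes\frak g_0\to\La^2\frak g_{-1}^*\otimes\frak g_{-1}$ should coincide, up to identifications, with (one component of) the homogeneity-one part of $\partial_K$ on $C^1(\frak g_-,\frak g)$. First I would make this comparison precise: an element of $\frak g_{-1}^*\otimes\frak g_0$ sits inside $C^1(\frak g_-,\frak g)=\La^1(\frak g_-)^*\otimes\frak g$ as a homogeneity-one cochain $\ph$ with $\ph(\frak g_{-1})\subset\frak g_0$ and $\ph(\frak g_{-2})=0$ (a general homogeneity-one cochain also has a component $\frak g_{-2}^*\otimes\frak g_{-1}$, but on the $\frak g_{-1}^*\otimes\frak g_0$ summand only the $\frak g_0$-valued piece is relevant). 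Computing $\partial_K\ph$ on $\frak g_{-1}\x\frak g_{-1}$ using the explicit formula, the bracket term $[X_i,\ph(X_j)]$ with $\ph(X_j)\in\frak g_0$ and $X_i\in\frak g_{-1}$ lands in $\frak g_{-1}$, and one checks this reproduces exactly $\partial_S\ph(X,Y)=\ph(X)(Y)-\ph(Y)(X)$ under the identification of the adjoint action of $\frak g_0$ on $\frak g_{-1}$ with the inclusion $\frak g_0\subset\frak{gl}(\frak g_{-1})$. The term involving $\ph([X_i,X_j])$ vanishes on this summand because $[X_i,X_j]\in\frak g_{-2}$ and $\ph$ kills $\frak g_{-2}$.

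Having identified $\partial_S$ with the relevant block of $\partial_K$, injectivity of $\partial_S$ follows from the cohomological input: by the discussion in Section \ref{4.3}, the homogeneity-one part of $H^1(\frak g_-,\frak g)$ must vanish. Indeed, by Kostant's theorem the harmonic representatives $\ker(\square)$ in homogeneity one sit inside $\La^2\frak g_{-1}^*\otimes\frak g_{-1}$ (degree two), so $H^1$ in homogeneity one is zero; hence $\partial_K$ is injective on homogeneity-one $1$-cochains modulo nothing, which forces $\partial_S$ to be injective. I would phrase this by noting that a nonzero kernel element of $\partial_S$ would give a nonzero class in $H^1(\frak g_-,\frak g)$ of homogeneity one (it cannot be a coboundary, since $\partial_K$ lowers homogeneity from $\frak g$, and the homogeneity-one part of $\frak g$ is $\frak g_1$, whose image under $\partial_K=\ad$ lands in the $\frak g_{-2}^*\otimes\frak g_{-1}$ component rather than in $\frak g_{-1}^*\otimes\frak g_0$), contradicting the vanishing of $H^1$ in homogeneity one.

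For the complementarity statement, the strategy is dimension counting combined with the Hodge decomposition. The homogeneity-one part $C^2_{(1)}$ of $C^2(\frak g_-,\frak g)$ splits as $\La^2\frak g_{-1}^*\otimes\frak g_{-1}$ plus $\frak g_{-2}^*\otimes\frak g_{-1}^*\otimes\frak g_{-2}$, and by Kostant $\ker(\square)$ in this degree lies in the first summand. The algebraic Hodge decomposition gives $C^2_{(1)}=\im(\partial_K)\oplus\ker(\square)\oplus\im(\partial^*)$ restricted to homogeneity one, and since $\partial_S$ is essentially $\partial_K$ restricted to the $\frak g_{-1}^*\otimes\frak g_0$ summand of $C^1_{(1)}$, I would argue that $\im(\partial_S)$ together with $\ker(\square)$ and $\La^3_0\frak g_{-1}^*\otimes\frak g_{-2}$ exhausts $\La^2\frak g_{-1}^*\otimes\frak g_{-1}$. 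Concretely, the embedding $\La^3\frak g_{-1}^*\otimes\frak g_{-2}\into\La^2\frak g_{-1}^*\otimes\frak g_{-1}$ described just before the theorem identifies the ``alternating'' part, and I expect $\La^3_0\frak g_{-1}^*\otimes\frak g_{-2}$ to account for the image of $\partial_K$ coming from the other chain-space summand while $\ker(\square)$ is the harmonic part; the zero-intersection and direct-sum claims then follow from the orthogonality in the Hodge decomposition.

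The main obstacle I anticipate is the complementarity claim rather than injectivity. Injectivity reduces cleanly to the vanishing of $H^1$ in homogeneity one, which is handed to us by Kostant's theorem and the cited results of Section \ref{4.3}. The delicate point is matching the three pieces $\im(\partial_S)$, $\ker(\square)$, and $\La^3_0\frak g_{-1}^*\otimes\frak g_{-2}$ against the Hodge decomposition of the full homogeneity-one $2$-cochains: one must carefully account for the contribution of the extra summand $\frak g_{-2}^*\otimes\frak g_{-1}^*\otimes\frak g_{-2}$ in $C^2_{(1)}$ and verify that, after projecting appropriately to $\La^2\frak g_{-1}^*\otimes\frak g_{-1}$, the images assemble into a direct-sum decomposition. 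This requires knowing that $\square$ is injective on the complement (equivalently, that $H^2$ in the relevant degree is precisely $\ker(\square)$) and identifying the image of the codifferential $\partial^*$ with the $\La^3_0$-piece. I would therefore devote the bulk of the argument to an explicit analysis of $\partial_K$ and $\partial^*$ on these homogeneity-one subspaces, using the Lie-bracket identification $\frak g_{-1}\cong\frak g_{-1}^*\otimes\frak g_{-2}$ to make the embedding of $\La^3_0\frak g_{-1}^*\otimes\frak g_{-2}$ concrete.
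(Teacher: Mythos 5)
Your overall strategy -- identify $\partial_S$ with the homogeneity--one block of $\partial_K$ mapping $\frak g_{-1}^*\otimes\frak g_0\subset C^1(\frak g_-,\frak g)$ into $\La^2\frak g_{-1}^*\otimes\frak g_{-1}\subset C^2(\frak g_-,\frak g)$, and then feed this into Kostant's theorem -- is exactly the paper's. But there are two genuine gaps. First, in the injectivity argument: to conclude from $\partial_S\ph=0$ that $\ph$ represents a class in $H^1$, you must first show $\partial_K\ph=0$, and $\partial_K\ph$ has a second component, in $\frak g_{-1}^*\otimes\frak g_{-2}^*\otimes\frak g_{-2}$, which $\partial_S\ph=0$ does not control. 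The paper kills it by noting that the $\La^3\frak g_{-1}^*\otimes\frak g_{-2}$--component of $0=\partial_K\partial_K\ph$ is $\ga(\partial_S\ph)$ plus the image of that second component under the trace--part inclusion $i$, which is injective. Your proposal skips this step entirely. Moreover, your reason why a cocycle $\ph\in\frak g_{-1}^*\otimes\frak g_0$ cannot be a coboundary is factually wrong: for $Z\in\frak g_1$ one has $\partial_KZ(X)=[X,Z]\in\frak g_0$ for $X\in\frak g_{-1}$, so $\partial_K(\frak g_1)$ does \emph{not} avoid the $\frak g_{-1}^*\otimes\frak g_0$ summand. The correct point is the opposite one: the $\frak g_{-2}^*\otimes\frak g_{-1}$--component $Z\mapsto\ad_Z|_{\frak g_{-2}}$ is injective, so a coboundary with vanishing $\frak g_{-2}^*$--component forces $Z=0$.

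Second, the complementarity argument is where your plan is weakest, as you yourself anticipate. Identifying $\im(\partial^*)$ on $\La^3_0\frak g_{-1}^*\otimes\frak g_{-2}$ with the \emph{specific} copy of $\La^3_0\frak g_{-1}^*\otimes\frak g_{-2}$ inside $\La^2\frak g_{-1}^*\otimes\frak g_{-1}$ that the theorem refers to (the one defined via the bracket isomorphism $\frak g_{-1}\cong\frak g_{-1}^*\otimes\frak g_{-2}$) is asserted but not proved; it depends on the inner products used to define $\partial^*$ and is not needed. The paper instead obtains the three zero--intersection statements from simple reducibility in Kostant's theorem: each irreducible component of $\ker(\square)$ occurs with multiplicity one in all of $C^*(\frak g_-,\frak g)$, hence cannot meet $\im(\partial_S)$ (built from irreducibles of $C^1$) or $\La^3\frak g_{-1}^*\otimes\frak g_{-2}$ (sitting in $C^3$); and $\im(\partial_S)\cap\La^3_0\frak g_{-1}^*\otimes\frak g_{-2}=0$ because $\ga(\partial_S\ph)$ is always pure trace by the $\partial_K^2=0$ identity above, while $\ga$ is injective on the embedded $\La^3_0$--piece. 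Spanning is then an explicit chase: subtract a $\ps_3$ matching the tracefree part of $\ga(\ps)$, absorb the trace part into the $\frak g_{-1}^*\otimes\frak g_{-2}^*\otimes\frak g_{-2}$ summand to get a $\partial_K$--cocycle, apply $\ker(\partial_K)=\im(\partial_K)\oplus\ker(\square)$, and normalize the resulting $1$--cochain by an element of $\frak g_1$ so that it lies in $\frak g_{-1}^*\otimes\frak g_0$. You would need to supply arguments of this kind (or verify your $\partial^*$ identification in detail) before the Hodge--theoretic sketch becomes a proof.
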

\begin{proof}
The homogeneity--one part of $(C^*(\frak g_-,\frak
g),\partial_K)$ can be decomposed as
$$
\xymatrix@R=8mm@C=8mm{%
\frak g_1 \ar[r]\ar[dr]_\al & \frak g_{-1}^*\otimes \frak
g_0\ar[r]^{\partial_S}\ar[dr] &\La^2\frak g_{-1}^*\otimes
\frak g_{-1}\ar[r]^\ga &\La^3\frak
g_{-1}^*\otimes\frak g_{-2} \\
& \frak g_{-2}^*\otimes\frak g_{-1} \ar[r]^{\be} \ar[ur] &\frak
g_{-1}^*\otimes\frak g_{-2}^*\otimes\frak g_{-2}\ar[ur]_i &}.
$$ 

Here the first column corresponds to $C^0$, the sum of the two spaces
in the next column is $C^1$, and so on. Moreover, we have split
$\partial_K$ into maps between the individual direct summands. The map
$\al$ by definition maps $Z\in\frak g_1$ to $\ad_Z:\frak
g_{-2}\to\frak g_{-1}$, which easily implies that $\al$ is a linear
isomorphism. The map $\be$ is just a tensor product of the linear
isomorphism $\frak g_{-1}\cong\frak g_{-1}^*\otimes\frak g_{-2}$
(defined by the bracket) with an identity map, so it is a linear
isomorphism, too. Next, $\frak g_{-1}^*\otimes\frak g_{-2}^*$
naturally includes as the trace part into $\La^3\frak g_{-1}^*$ and
$i$ is just the tensor product of this inclusion with the identity map
and thus is injective. Finally, from the definition of $\partial_K$,
one immediately concludes that $\ga$ is the composition of the
alternation with the obvious isomorphism $\La^2\frak
g_{-1}^*\otimes\frak g_{-1}\cong \La^2\frak g_{-1}^*\otimes\frak
g_{-1}^*\otimes\frak g_{-2}$.

Now suppose that $\ph\in\frak g_{-1}^*\otimes\frak g_0$ satisfies
$\partial_S(\ph)=0$. Viewing $\ph$ as an element of $C^1(\frak
g_{-},\frak g)$, we have $\partial_K(\ph)\in \frak
g_{-1}^*\otimes\frak g_{-2}^*\otimes\frak g_{-2}$ and hence
$0=\partial_K\partial_K\ph=i(\partial_K\ph)$. Since $i$ is injective,
we have $\partial_K\ph=0$. As we have noted in Section \ref{4.1}
already, Kostant's theorem implies that $H^1(\frak g_-,\frak g)$ is
concentrated in homogeneity zero, so there must be an element
$Z\in\frak g_1$ such that $\ph=\partial_K(Z)$. But this implies that
$\al(Z)=0$ and hence $Z=0$ and thus $\ph=0$, so injectivity of
$\partial_S$ follows.

Next consider $\ker(\square)\subset\La^2\frak g_{-1}^*\otimes\frak
g_{-1}$. By Kostant's theorem, each irreducible component of
$\ker(\square)$ occurs with multiplicity one in $C^*(\frak g_-,\frak
g)$. Hence we conclude that each such component must have zero
intersection with $\Im(\partial_S)$, which is a sum of irreducible
representations contained in $C^1(\frak g_{-},\frak g)$. Likewise, it
has to have zero intersection with the subspace $\La^3\frak
g_{-1}^*\otimes\frak g_{-2}$, which is a sum of irreducible
representations contained in $C^3(\frak g_{-},\frak g)$.

Now consider $\ph\in\frak g_{-1}^*\otimes\frak g_0$. Then
$0=\partial_K\partial_K\ph=\ga(\partial_S\ph)+i(\partial_K\ph-\partial_S\ph)$,
which implies that $\ga(\partial_S\ph)$ lies in the trace part of
$\La^3\frak g_{-1}^*\otimes\frak g_{-2}$. This shows that
$\Im(\partial_S)$ has zero intersection with $\La^3_0\frak
g_{-1}^*\otimes\frak g_{-2}$ and hence also with the direct sum of
that space and $\ker(\square)$.

To complete the proof, it thus suffices to show that the three
subspaces $\Im(\partial_S)$, $\ker(\square)$, and $\La^3_0\frak
g_{-1}^*\otimes\frak g_{-2}$ span $\La^2\frak g_{-1}^*\otimes\frak
g_{-1}$. Given $\ps\in \La^2\frak g_{-1}^*\otimes\frak g_{-1}$, there
is a unique element $\ps_3\in\La^3_0\frak g_{-1}^*\otimes\frak
g_{-2}\subset \La^2\frak g_{-1}^*\otimes\frak g_{-1}$ such that
$\ga(\ps_3)$ coincides with the tracefree part of $\ga(\ps)$. Hence
$\ga(\ps-\ps_3)$ is pure trace, so there is a unique element
$\tilde\ps\in\frak g_{-1}^*\otimes\frak g_{-2}^*\otimes\frak g_{-2}$
such that $i(\tilde\ps)=\ga(\ps-\ps_3)$ and hence
$\partial_K(\ps-\tilde\ps-\ps_3)=0$. Now Kostant's theorem implies
that $\ker(\square)$ is a linear complement to $\Im(\partial_K)$ in
$\ker(\partial_K)$. Hence there are elements $\ps_2\in\ker(\square)$
and $\tilde\ph\in C^1(\frak g_-,\frak g)$ such that
$\ps-\tilde\ps-\ps_3=\ps_2+\partial_K\tilde\ph$. Finally, there is an
element $Z\in\frak g_1$ such that $\al(Z)$ coincides with the
component of $\tilde\ph$ in $\frak g_{-2}^*\otimes\frak g_{-1}$, and
we put $\ph=\tilde\ph-\partial_KZ\in\frak g_{-1}^*\otimes\frak
g_0$. Then by construction, we have
$\partial_K\ph=\partial_K\tilde\ph$, so we conclude that
$\ps-\tilde\ps-\ps_3=\ps_2+\partial_K\ph$. Now the component of
$\partial_K\ph$ in $\La^2\frak g_{-1}^*\otimes\frak g_{-1}$ equals
$\partial_S\ph$ by construction, so looking at the components in that
space, we get $\ps=\partial_S\ph+\ps_2+\ps_3$ and the proof is
complete.
\end{proof}

\subsection{Canonical connections for PACS-structures}\label{4.4}
Converting the algebraic results of Theorem \ref{thm4.3} to geometry now
is an easy task. Consider a contact grading on $\frak g$ (which is not
of type $C_n$) and a corresponding group $G_0$. Then by definition a
PACS--structure of type $G_0$ on a smooth manifold $M$ is
given by a reduction of structure group of the linear frame bundle to
$G_0$. We denote by $\Cal G_0\to M$ the corresponding principal
bundle. Via associated bundles, any representation of $G_0$ gives rise
to a natural vector bundle on each such manifold. By construction, for
the representation $\frak g_{-1}$, one obtains $\Cal G_0\x_{G_0}\frak
g_{-1}\cong TM$. Likewise, the dual map to the Lie bracket includes
$\frak g_{-2}^*$ as the distinguished line into $\La^2\frak g_{-1}^*$,
so $\Cal G_0\x_{G_0}\frak g^*_{-2}\cong\ell\subset\La^2T^*M$, the
almost conformally symplectic structure underlying the
PACS--structure.

Next, the representation $\La^2\frak g_{-1}^*\otimes\frak g_{-1}$
induces the bundle $\La^2T^*M\otimes TM$ of tangent--bundle--valued
two--forms. The $G_0$--invariant subspace $\ker(\square)$ corresponds
to a smooth subbundle $\ker(\square)\subset\La^2T^*M\otimes TM$, whose
elements will be called \textit{algebraically harmonic}. Likewise,
there is an inclusion
$\La^3_0T^*M\otimes\ell^*\hookrightarrow\La^2T^*M\otimes TM$
corresponding to the algebraic inclusion observed before Theorem
\ref{thm4.3}. This inclusion depends only on the underlying almost
conformally symplectic structure. Indeed, non--degeneracy of $\ell$
implies that $\ell\otimes TM\cong T^*M$, so $TM\cong
T^*M\otimes\ell^*$. Hence $\La^2T^*M\otimes TM\cong\La^2T^*M\otimes
T^*M\otimes\ell^*$, so the above inclusion is obviously there.

\begin{cor}\label{cor4.4}
Suppose that $M$ is endowed with a PACS--structure corresponding to a
contact grading on $\frak g$ (which is not of type $C_n$).

(i) There is a unique linear connection on $TM$ which is compatible
with the PACS--structure and whose torsion $T$ lies in the subspace
$\ker(\square)\oplus (\La^3_0T^*M\otimes\ell^*)$.

(ii) Decomposing $T=T_h\oplus T_a$ according to the direct sum
decomposition in (i), the component $T_a$ is the intrinsic torsion of
the conformally almost symplectic structure underlying the
PACS--structure. In particular, in the case of a PCS--structure, the
canonical connection has algebraically harmonic torsion. 

(iii) In the case of a PCS--structure, the connection on $\ell$
induced by the canonical connection $\nabla$ is flat and its local
parallel sections are exactly those which are closed as two--forms.
\end{cor}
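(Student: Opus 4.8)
The plan is to deduce all three parts of Corollary~\ref{cor4.4} directly from the algebraic decomposition in Theorem~\ref{thm4.3} together with the general framework of first order $G$--structures recalled in Section~\ref{2.2}. The key observation is that Theorem~\ref{thm4.3} exhibits $\ker(\square)\oplus(\La^3_0\frak g_{-1}^*\otimes\frak g_{-2})$ as a $\frak g_0$--invariant linear complement to $\Im(\partial_S)$ inside $\La^2\frak g_{-1}^*\otimes\frak g_{-1}$, and that $\partial_S$ is injective, i.e.\ the first prolongation $\frak g_0^{(1)}=\ker(\partial_S)$ vanishes. These two facts are exactly the algebraic input that the abstract machinery of Section~\ref{2.2} converts into a unique compatible connection.

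For part (i), I would first pass from the $G_0$--representation to associated bundles over $\Cal G_0$: the complement $N:=\ker(\square)\oplus(\La^3_0\frak g_{-1}^*\otimes\frak g_{-2})$ is $G_0$--invariant (being a sum of $\frak g_0$--invariant pieces coming from Kostant's theorem and the underlying conformally symplectic data), so it determines a subbundle $\Cal N\subset\La^2T^*M\otimes TM$ that serves as a normalization condition in the sense of Section~\ref{2.2}. Since $N$ is a linear complement to $\Im(\partial_S)$, the discussion there guarantees the existence of normal principal connections whose induced torsion is a section of $\Cal N$; and since $\frak g_0^{(1)}=\{0\}$, the space of normal connections is modeled on $\Cal G_0\x_{G_0}\frak g_0^{(1)}=\{0\}$, so the normal connection is \emph{unique}. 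This gives exactly the asserted unique compatible linear connection on $TM$ with torsion in $\ker(\square)\oplus(\La^3_0T^*M\otimes\ell^*)$.

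For part (ii), I would identify $T_a$ with the intrinsic torsion of the underlying almost conformally symplectic structure. The point is that the intrinsic torsion lives in the quotient $(\La^2V^*\otimes V)/\Im(\partial_S^{\frak{csp}})$, and by Proposition~\ref{prop2.3}(2) this quotient is $\La^3_0V^*$, matching the summand $\La^3_0T^*M\otimes\ell^*$. Concretely, $\Im(\partial_S)$ for $\frak g_0$ is contained in $\Im(\partial_S^{\frak{csp}})$ for the larger algebra $\frak{csp}(\frak g_{-1})$, and one checks that the $\ker(\square)$--component maps to zero in the $\frak{csp}$--quotient while the $\La^3_0$--component maps isomorphically onto it. Hence the class of $T$ in the conformally symplectic intrinsic torsion equals the image of $T_a$, so $T_a$ is precisely that intrinsic torsion. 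In the PCS case, Proposition~\ref{prop2.3}(3) shows this intrinsic torsion vanishes, so $T_a=0$ and $T=T_h$ is algebraically harmonic.

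For part (iii), in the PCS case the canonical connection is compatible with $\ell$ by construction and has $T_a=0$; I would argue that the $\La^3_0$--free torsion lets us reuse the computation from the proof of Proposition~\ref{prop2.3}(3)--(4): compatibility gives $\nabla\al=\ph\otimes\al$ for a local nonvanishing section $\al$ of $\ell$, and since $T_a=0$ the alternation of $\nabla\al$ still computes $d\al$, so $d\al=\ph\wedge\al$ and the induced connection on the line bundle $\ell$ has curvature proportional to $d\ph\wedge(\,\cdot\,)$ forcing flatness, with closed sections being exactly the parallel ones. The main obstacle is part (ii): I must verify carefully how $\Im(\partial_S)$ for $\frak g_0$ sits inside $\Im(\partial_S)$ for $\frak{csp}(\frak g_{-1})$ and confirm that the harmonic summand $\ker(\square)$ projects trivially to the conformally symplectic quotient $\La^3_0V^*$, so that the splitting $T=T_h\oplus T_a$ genuinely refines the coarser intrinsic-torsion decomposition rather than mixing the two invariants.
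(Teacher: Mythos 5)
Your proposal is correct and follows essentially the same route as the paper: part (i) is Theorem \ref{thm4.3} plus the normalization/prolongation machinery of Section \ref{2.2}, part (ii) compares the splitting with the coarser $\frak{csp}$--intrinsic torsion, and part (iii) reduces to the proof of Proposition \ref{prop2.3}(4). The one step you flag as open --- that $\ker(\square)$ dies in the quotient $\La^3_0V^*$ --- is exactly what the paper settles via the map $\ga$ from the proof of Theorem \ref{thm4.3}: each irreducible constituent of $\ker(\square)$ occurs with multiplicity one in $C^*(\frak g_-,\frak g)$ by Kostant's theorem, so by Schur's lemma it admits no nonzero equivariant map to $\La^3\frak g_{-1}^*\otimes\frak g_{-2}\subset C^3$, whence $\ga$ (and a fortiori its tracefree part) vanishes on $\ker(\square)$; this same vanishing is what makes the alternation of $i_T\tau$ drop out in part (iii).
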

\begin{proof}
(i) follows immediately from Theorem \ref{thm4.3} and the general
  relation between the Spencer differential and compatible connections
  as discussed in Section \ref{2.2}.

For (ii), recall the description of the intrinsic torsion of the
almost conformally symplectic structure from the proof of Proposition
\ref{prop2.3}. This shows that, viewing the value of the torsion in a point as
$\ps\in \La^2\frak g_{-1}^*\otimes\frak g_{-1}^*$, the intrinsic part
of the torsion exactly corresponds to the tracefree part of
$\ga(\ps)$, where $\ga$ is the map from the proof of Theorem
\ref{thm4.3}. Since $\ga$ vanishes on $\ker(\square)$ and restricts to an
injection on $\La^3_0T^*M\otimes\ell\subset\La^2T^*M\otimes TM$, this
implies the result. 

(iii) By assumption $T_a=0$ and hence $T=T_h$ is algebraically
harmonic. But then the fact that $\ker(\square)\subset\ker(\ga)$
implies that for any section $\tau\in\Ga(\ell)$, the bundle map
$i_T\tau$ used in the proof Proposition \ref{prop2.3} has vanishing
alternation. Hence $d\tau$ can be computed as the alternation of
$\nabla\tau$ and the proof of part (4) of Proposition \ref{prop2.3}
applies.
\end{proof}

We will refer to the components $T_h$ and $T_a$ from the second part
as the \textit{harmonic torsion} and the \textit{acs--torsion} of a
PACS--structure.

\begin{remark}\label{rem4.4}
It may happen that both $\ker(\square)$ and $\La^3_0\frak
g_{-1}^*\otimes\frak g_{-2}$ are not irreducible representations of
$G_0$ but decompose into a direct sum of irreducibles. If this is the
case, then one obtains corresponding decompositions of $T_h$ and/or
$T_a$ and there are notions of ``semi--integrability'' or
``semi--torsion--freeness'' available. We will discuss this in examples
below.
\end{remark}

\subsection{Example: (para--)K\"ahler type}\label{4.5} 
The form of $\ker(\square)\subset\La^2T^*M\otimes TM$ can be deduced
from Kostant's theorem, and in most cases, a detailed description is
available in the literature on parabolic contact structures. We will
discuss the PACS--structures of K\"ahler type in more detail, since
they have the strongest connections to well studied structures, and
only briefly comment on the other types. 

Suppose that $M$ carries a PACS--structure of K\"ahler type, and let
$J$ be the corresponding almost complex structure on $M$.  The
harmonic part $\ker(\square)\subset\La^2T^*M\otimes TM$ is determined
in Section 4.2.4 of \cite{book}. It consists of those skew symmetric
bilinear maps, which are of type $(0,2)$, i.e.~which are conjugate
linear (with respect to $J$) in both arguments. Since this subbundle
is induced by an irreducible representation, there is no finer
decomposition of the harmonic torsion $T_h$ available.

On the other hand, the bundle $\La^3_0T^*M\otimes\ell^*$ decomposes
into the sum of two subbundles according to $(p,q)$--types.  (Since we
are dealing with real valued forms, there are just two summands, whose
complexifications split into the sums of types $(3,0)$ and $(1,2)$
respectively $(2,1)$ and $(0,3)$.) So there is a corresponding
decomposition of the acs--torsion into two components.

\begin{prop}\label{prop4.5}
Consider a PACS--structure of K\"ahler type on $M$ corresponding to a
conformal class of almost Hermitian metrics $(g,J)$ on $M$. Then the
harmonic torsion $T_h$ of the geometry coincides (up to a non--zero
constant factor) with the Nijenhuis tensor of the almost complex
structure $J$.

In particular, the canonical connection $\nabla$ of the structure is
torsion free if and only if we deal with a PCS--structure and $J$ is
an integrable complex structure. This is equivalent to the conformal
class locally containing (pseudo--)K\"ahler metrics of the given
signature, which then are unique up to a constant factor. In this
case, the canonical connection coincides with the Levi--Civita
connections of the local K\"ahler metrics in the conformal class.
\end{prop}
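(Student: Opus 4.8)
The plan is to compute the torsion of the canonical connection via the Nijenhuis tensor, exploiting that $\nabla$ preserves $J$. Since $J$ is induced by the $G_0$--invariant complex structure on $\frak g_{-1}$ (multiplication by $i$, which commutes with every complex linear map in $G_0$), it is parallel for any connection compatible with the PACS--structure, so $\nabla J=0$. For any linear connection with $\nabla J=0$ and torsion $T$, substituting $[\xi,\eta]=\nabla_\xi\eta-\nabla_\eta\xi-T(\xi,\eta)$ into the definition of the Nijenhuis tensor makes all derivative terms cancel, leaving
$$
N_J(\xi,\eta)=-\big(T(\xi,\eta)+JT(J\xi,\eta)+JT(\xi,J\eta)-T(J\xi,J\eta)\big).
$$
First I would record this identity and then study the operator $\Cal A(T):=T+JT(J\cdot,\cdot)+JT(\cdot,J\cdot)-T(J\cdot,J\cdot)$ on $\La^2T^*M\otimes TM$ according to the behaviour of $T$ under $J$ in its two arguments: a short check on the four argument--types shows that $\Cal A$ acts as multiplication by $4$ on the part that is conjugate linear in both arguments and vanishes on the complement, so $\Cal A=4\,\pr_{(0,2)}$.

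Next I would match $\pr_{(0,2)}$ with the projection onto $\ker(\square)$. By the description recalled before Proposition \ref{prop4.5}, the harmonic subbundle $\ker(\square)\subset\La^2T^*M\otimes TM$ is exactly the type $(0,2)$ part and is induced by an irreducible $\frak g_0$--representation $\Bbb W$. Writing the canonical torsion as $T=T_h\oplus T_a$ with $T_h\in\ker(\square)$ and $T_a\in\La^3_0T^*M\otimes\ell^*$ (Corollary \ref{cor4.4}), it remains to see that $\Cal A(T_a)=0$, i.e.~that $T_a$ has no type $(0,2)$ component. This follows from the multiplicity--one property used in the proof of Theorem \ref{thm4.3}: a non--zero $\frak g_0$--equivariant map $\La^3_0T^*M\otimes\ell^*\to\Bbb W$ would exhibit $\Bbb W$ as a component of $\La^3_0\frak g_{-1}^*\otimes\frak g_{-2}\subset C^3(\frak g_-,\frak g)$ as well as of $\ker(\square)\subset C^2(\frak g_-,\frak g)$, contradicting that each harmonic component occurs only once in $C^*(\frak g_-,\frak g)$. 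Hence $N_J=-4\,T_h$, which is the first assertion (the precise constant is convention--dependent).

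The remaining claims are then formal. Torsion--freeness of $\nabla$ means $T_h=0$ and $T_a=0$; by the previous paragraph $T_h=0$ is equivalent to $N_J=0$, i.e.~to integrability of $J$ by the Newlander--Nirenberg theorem, while $T_a=0$ is equivalent to the structure being a PCS--structure, since $T_a$ is the intrinsic torsion of the underlying almost conformally symplectic structure (Corollary \ref{cor4.4}(ii)). Combining Proposition \ref{prop3.2}(3) (PCS is equivalent to local existence of almost K\"ahler metrics, i.e.~of closed sections of $\ell$) with integrability of $J$ shows that the conformal class locally contains genuine K\"ahler metrics, and uniqueness up to a constant factor is precisely the uniqueness of local closed sections of $\ell$ from Proposition \ref{prop2.1}(iii). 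Finally, for a local K\"ahler metric $g$ in the class, its Levi--Civita connection $\nabla^g$ is torsion free and satisfies $\nabla^g g=\nabla^g J=0$, hence preserves the fundamental two--form $\om(\xi,\eta)=-g(\xi,J\eta)$ and the line $\ell$; thus $\nabla^g$ is a compatible connection with vanishing (in particular algebraically harmonic) torsion, and the uniqueness in Corollary \ref{cor4.4}(i) forces $\nabla=\nabla^g$. Since the Levi--Civita connection is unchanged under constant rescalings of $g$, this is consistent across the whole class.

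The step I expect to be the main obstacle is the middle one: verifying cleanly that $T_a$ contributes nothing to $\Cal A(T)$. The type computation by itself does not settle this, and the safest route is the representation--theoretic multiplicity--one argument above; one must be careful to check that $\ker(\square)$ really equals the \emph{entire} type $(0,2)$ subspace (not a proper subrepresentation of it), so that $\pr_{(0,2)}$ and the projection onto $\ker(\square)$ agree when applied to $T$.
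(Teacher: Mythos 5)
Your argument is correct and follows essentially the same route as the paper: both exploit $\nabla J=0$ to identify the $(0,2)$--part of the torsion with the Nijenhuis tensor, invoke the description of $\ker(\square)$ as the type--$(0,2)$ subbundle, and then reduce the final claims to Proposition \ref{prop3.2} and the uniqueness statement of Corollary \ref{cor4.4}. Your explicit verification that $T_a$ contributes nothing to the $(0,2)$--projection (via the multiplicity--one property from Kostant's theorem) makes precise a point the paper's proof leaves implicit, and your type computation confirming $\Cal A=4\,\pr_{(0,2)}$ is sound.
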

\begin{proof}
Compatibility of a linear connection $\nabla$ on $TM$ with the
PACS--structure in particular implies that $J$ is parallel with
respect to the induced connection. But it is well known that this
implies that the $(0,2)$--part of the torsion of $\nabla$ is a
non--zero multiple of the Nijenhuis tensor of $J$. In view of the
description of $\ker(\square)$ given above, this implies the claim on
$T_h$ and that $\nabla$ is torsion free if and only if the structure
is PCS and $J$ is integrable. 

For the last claim, we have already seen in Proposition \ref{prop3.2} that
the PCS--condition is equivalent to the existence of local almost
K\"ahler metrics in the conformal class. But an almost K\"ahler metric
is well known to be K\"ahler if and only if the corresponding almost
complex structure is integrable. We have also seen there that these
local metrics are unique up to a constant factor, so they all have the
same Levi--Civita connection. Since the Levi--Civita connection also
preserves $J$, it preserves the PCS--structure and thus coincides with
$\nabla$.
\end{proof}

The case of PACS--structures of para--K\"ahler type can be analyzed in
a very similar fashion. Apart from $\ell\subset\La^2T^*M$, we have a
decomposition $TM=E\oplus F$ into a direct sum of two
rank--$n$--distributions which are isotropic for $\ell$ in this
case. This gives rise to an isomorphism $\ell\otimes E\cong F^*$. The
harmonic part $\ker(\square)\subset\La^2(E\oplus F)^*\otimes(E\oplus
F)$ is determined in Section 4.3.1 of \cite{book}. It is the direct
sum of two bundles induced by irreducible representations of $G_0$
which correspond to the highest weight bits in $\La^2E^*\otimes F$ and
$\La^2F^*\otimes E$, respectively. Accordingly, the harmonic torsion
decomposes into two pieces $T_h=T_h^E+T_h^F$. A linear connection
$\nabla$ on $TM$ which is compatible with the PACS--structure in
particular preserves the subbundles $E$ and $F$. Using this, one
easily verifies that for $\xi,\eta\in\Ga(E)$, $T_h^E(\xi,\eta)$ is
obtained by projecting $-[\xi,\eta]$ to $F$. So this is exactly the
obstruction to involutivity of the distribution $E$, and similarly for
$T^F_h$.

In particular, we readily see that torsion freeness of the canonical
connection $\nabla$ is equivalent to the structure being PCS and both
$E$ and $F$ being involutive. In this case, $\nabla$ locally coincides
with the Levi--Civita connection of a para--K\"ahler metric. The
acs--torsion in this case splits into four components according
to the analog of $(p,q)$--types.

\subsection{Other examples}\label{4.6}
For a PACS--structure of Grassmannian type, we have an almost
Grassmannian structure $TM\cong E^*\otimes F$, where $E$ and $F$ are
auxiliary bundles of ranks $2$ and $n$, respectively, and an almost
conformally symplectic structure $\ell\subset \La^2E\otimes
S^2F^*\subset\La^2T^*M$. As we have noted in Section \ref{3.3}, this induces
a non--degenerate symmetric bilinear form $b$ on $F$ determined up to
scale, thus defining a signature $(p,q)$. 

The harmonic part $\ker(\square)\subset\La^2T^*M\otimes TM$ for this
case is determined in Section 4.2.5 of \cite{book}. It is a direct sum
of two subbundles, both of which are induced by irreducible
representations. The first of these is isomorphic to $\La^2E\otimes
E^*\otimes S^3_0F^*$, where $S^3_0F^*$ denotes the tracefree part of
the symmetric cube of $F^*$, which, via $b$, sits inside
$S^2F^*\otimes F$. We denote the corresponding component by
$T^\ell_h$. 

The other component is a bit more complicated to describe. It is
contained in $(S^2E^*\otimes E)_0\otimes(\La^2F^*\otimes F)_0$, where
the subscripts indicate tracefree parts. Now the first factor already
corresponds to an irreducible representation of $\frak g_0$, but the
second factor can be included into $\La^2F^*\otimes F^*$ via $b$, and
we have to take the kernel of the resulting alternation map to
$\La^3F^*$. Let us write $T^G_h$ for the corresponding component of
the harmonic torsion.

Now an interesting feature of this case is that, in the case $n>2$,
which we always consider here, an almost Grassmannian structure has an
intrinsic torsion in its own right, see Section 4.1.3 of
\cite{book}. This intrinsic torsion corresponds to the component in
$(S^2E^*\otimes E)_0\otimes(\La^2F^*\otimes F)_0$, which thus is the
same for all linear connections compatible with the almost
Grassmannian structure. In particular, we conclude that the component
$T^G_h$ of the harmonic torsion depends only on the underlying almost
Grassmannian structure and not on $\ell$, and it vanishes if and only
if the structure is Grassmannian. On the other hand, the component
$T^\ell_h$ of the harmonic torsion is a basic invariant for an almost
conformally symplectic structure which is Hermitian with respect to an
almost Grassmannian structure. There is also a finer decomposition of
the acs--torsion $T_a$ available in the Grassmannian case, but we do
not go into this.

The case of PACS--structures of quaternionic type can be dealt with
similarly, with quaternionic linearity and anti--linearity
respectively hermiticity and anti--hermiticity properties replacing
the decompositions coming from the tensor product structure. In
particular, one again obtains one component in the harmonic torsion
which only depends on the underlying almost quaternionic structure and
whose vanishing is equivalent to the structure being quaternionic.

\medskip

For the exceptional PACS--structures, we just give a general
description of $\ker(\square)$. Making things more explicit is a
question of representation theory. As mentioned in Section 4.2.8 of
\cite{book}, a case--by--case inspection shows that $\ker(\square)$
always is induced by an irreducible representation, so there is no
finer decomposition of the harmonic torsion available in the
exceptional cases. In terms of representation theory, this component
can be easily characterized as the Cartan product (i.e.~the
irreducible component of maximal highest weight in the tensor product)
of $\La^2_0(\frak g_{-1})^*$ and $\frak g_{-1}$. Similarly, the
decomposition of the acs--torsion is a question of representation
theory in these cases, and we do not go into details here.

\subsection{Relation to special symplectic connections}\label{4.7}
To conclude the discussion of canonical connections associated to
PACS--structures, we briefly discuss their relation to the theory of
special symplectic connections developed in
\cite{Cahen-Schwachhoefer}. Consider a real simple Lie algebra $\frak
g$, which is not of type $C_n$ and admits a contact grading as
discussed in Section \ref{3.1}. Then we get an inclusion $\frak
g_0\hookrightarrow \frak{csp}(\frak g_{-1})$. As noted in Section
\ref{3.1}, the associated special symplectic subalgebra in the sense
of Cahen--Schwachh\"ofer is $\frak g^0_0:=\frak g_0\cap\frak{sp}(\frak
g_{-1})$.

A crucial ingredient in \cite{Cahen-Schwachhoefer} is that the special
symplectic subalgebra $\frak g_0^0$ determines two spaces of curvature
tensors. First, as for any Lie algebra of matrices, there is the space
$K(\frak g_0^0)$ of formal curvature tensors having values in $\frak
g_0^0$, which is defined as 
$$
\{R\in\La^2(\frak g_{-1})^*\otimes\frak g_0^0:
R(X,Y)(Z)+R(Z,X)(Y)+R(Y,Z)(X)=0\ \forall X,Y,Z\in\frak g_{-1}\}. 
$$
So except for the condition on the values, one just requires the first
Bianchi--identity to hold. Observe that $K(\frak g_0^0)$ naturally
is a $\frak g_0^0$--module. For a special symplectic subalgebra $\frak
g_0^0$, there is a distinguished submodule $\Cal R_{\frak
  g_0^0}\subset K(\frak g_0^0)$. In terms of the contact grading of
$\frak g$, this can be easily described as follows. Choose a non--zero
element $\ps\in\frak g_2$ and then for $A\in \frak g_0^0$ define 
$R_A:\La^2\frak g_{-1}\to\frak g_0$ by 
$$
R_A(X,Y):=[X,[\ps,[A,Y]]]-[Y,[\ps,[A,X]]]. 
$$
Using the Jacobi--identity, one immediately verifies that since
$A\in\frak g_0^0$, $R_A(X,Y)$ always acts trivially on $\frak g_{-2}$
so the values actually lie in $\frak g_0^0$. The Jacobi--identity also
implies that, for $A\in\frak g_0^0$, $R_A$ satisfies the first Bianchi
identity. Hence we have obtained a map $\frak g_0^0\to K(\frak
g_0^0)$, and we denote by $\Cal R_{\frak g_0^0}$ the image of this
mapping. In \cite{Cahen-Schwachhoefer} it is proved that the
Ricci--type contraction maps $\Cal R_{\frak g_0^0}$ to $\frak g_0^0$
and that this contraction vanishes on $R_A$ if and only if
$A=0$. Hence $\Cal R_{\frak g_0^0}$ is isomorphic to $\frak
g_0^0$. 

By construction, $\Cal R_{\frak g_0^0}\subset K(\frak g_0^0)\subset
K(\frak{sp}(\frak g_{-1}))$, so given a symplectic manifold $M$ of
dimension $\dim(\frak g_{-1})$, the space $\Cal R_{\frak g_0^0}$
corresponds to smooth subbundle in $\La^2T^*M\otimes\frak{sp}(TM)$.
Then Cahen--Schwachh\"ofer define a special symplectic connection as a
torsion free connection on a symplectic manifold, whose curvature has
values in this subbundle for some special symplectic subalgebra.

The final ingredient we need for our discussion is a result on the
structure of $K(\frak g_0^0)$ which is proved in Theorem 2.11 of
\cite{Cahen-Schwachhoefer}. The form of the result is closely related 
to the occurrence of Lie algebra cohomology in degree two of
homogeneity zero. This suggests that the result can be deduced from
Kostant's theorem in a way similar to our proof of Theorem \ref{thm4.3},
but since this is not directly related to the topic of this article,
we do not do this but just quote the result. 

\begin{lemma}\label{lem4.7}
If $\frak g$ is not of type $A_n$ or $C_n$ (and hence not of type
$B_2\cong C_2$), then $K(\frak g_0^0)=\Cal R_{\frak g_0^0}$. 
\end{lemma}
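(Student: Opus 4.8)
The plan is to derive the lemma from Kostant's theorem by exactly the method used for Theorem \ref{thm4.3}, but one homogeneity higher. The starting observation is that $K(\frak g_0^0)$ is itself a space of Kostant cocycles. Because every $A\in\frak g_0^0$ acts trivially on $\frak g_{\pm 2}$, for $R\in\La^2\frak g_{-1}^*\otimes\frak g_0^0$ the potential component of $\partial_K R$ in $\La^2\frak g_{-1}^*\otimes\frak g_{-2}^*\otimes\frak g_{-2}$, namely $(Z,X,Y)\mapsto[Z,R(X,Y)]$, vanishes identically, so the only surviving component is the alternation $\ga$ into $\La^3\frak g_{-1}^*\otimes\frak g_{-1}$. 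Evaluating $\partial_K R$ on three arguments from $\frak g_{-1}$ shows that this alternation is precisely the first Bianchi sum $\sum_{\mathrm{cyc}}R(X,Y)(Z)$. Hence
$$
K(\frak g_0^0)=\ker(\partial_K)\cap(\La^2\frak g_{-1}^*\otimes\frak g_0^0),
$$
sitting inside the homogeneity--two part of $C^2(\frak g_-,\frak g)$. As in Theorem \ref{thm4.3} this homogeneity splits into two rows, now with $C^1$--term $(\frak g_{-1}^*\otimes\frak g_1)\oplus(\frak g_{-2}^*\otimes\frak g_0)$ and $C^2$--term $(\La^2\frak g_{-1}^*\otimes\frak g_0)\oplus(\frak g_{-1}^*\otimes\frak g_{-2}^*\otimes\frak g_{-1})$, with $\ga$ the analogue of the map $\ga$ from that proof.

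Next I would locate $\Cal R_{\frak g_0^0}$ inside this picture. For $A\in\frak g_0^0$ set $\ph_A(X):=[\ps,[A,X]]\in\frak g_1$, so that $\ph_A\in\frak g_{-1}^*\otimes\frak g_1$; since $A$ kills $\frak g_{-2}$ one has $\ph_A|_{\frak g_{-2}}=0$, and a short computation gives that the $\La^2\frak g_{-1}^*\otimes\frak g_0$--component of $\partial_K\ph_A$ is exactly $R_A$ (the remaining cross--component is $(Z,X)\mapsto[Z,\ph_A(X)]=[[Z,\ps],[A,X]]$, using $[Z,X]\in\frak g_{-3}=0$ in the Jacobi identity). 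Thus $\Cal R_{\frak g_0^0}$ consists of the $\frak g_0$--valued parts of these exact cochains. Moreover the Ricci--type contraction of \cite{Cahen-Schwachhoefer} is the relevant component of the Kostant codifferential $\partial^*$; harmonic cochains are coclosed, hence have vanishing Ricci contraction, and since that reference proves the contraction to be injective on $\Cal R_{\frak g_0^0}$, we obtain $\Cal R_{\frak g_0^0}\cap\ker(\square)=0$. In particular $\Cal R_{\frak g_0^0}$ contributes only to the exact direction of the Hodge decomposition.

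The lemma is therefore equivalent to computing the $\frak g_0^0$--valued Bianchi kernel $\ker(\ga)\cap(\La^2\frak g_{-1}^*\otimes\frak g_0^0)$ and showing it is a single copy of $\frak g_0^0$. Using the algebraic Hodge decomposition $\ker(\partial_K)=\Im(\partial_K)\oplus\ker(\square)$ at this bidegree, together with the fact (from Kostant, exactly as invoked in Theorem \ref{thm4.3}) that $H^1(\frak g_-,\frak g)$ vanishes outside homogeneity zero -- so that the incoming differential from $(\frak g_{-1}^*\otimes\frak g_1)\oplus(\frak g_{-2}^*\otimes\frak g_0)$ has the maximal possible rank and the exact contribution to $\La^2\frak g_{-1}^*\otimes\frak g_0^0$ is accounted for precisely by the $\ph_A$ -- the count reduces to a single unknown, the degree--two, homogeneity--two harmonic space $\ker(\square)$. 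This space is computed algorithmically by Kostant's theorem: it is a simply reducible sum of explicit $\frak g_0$--irreducibles whose highest weights and degrees are read off from the length--two elements of the Hasse diagram of the contact parabolic, exactly as the homogeneity--one harmonic torsion was pinned down in Theorem \ref{thm4.3} via Propositions 3.3.7 and 4.3.1 of \cite{book}; the data are tabulated for every contact grading in Section 4.2 of \cite{book}.

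The one genuinely hard point is this final case analysis, and it is also where the hypothesis enters. It suffices to show that for $\frak g$ not of type $A_n$ or $C_n$ the degree--two, homogeneity--two harmonic space has no constituent inside $\La^2\frak g_{-1}^*\otimes\frak g_0^0$, i.e.\ $\ker(\square)\cap(\La^2\frak g_{-1}^*\otimes\frak g_0^0)=0$. For type $C_n$ this fails: the harmonic class is the $\frak{sp}(\frak g_{-1})$--valued Weyl curvature, which lies in $\La^2\frak g_{-1}^*\otimes\frak g_0^0$ and produces the Ricci--type connections. For type $A_n$ an analogous $\frak g_0^0$--valued harmonic curvature appears, consistent with the non--maximality already recorded in Proposition \ref{prop4.2}. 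The content to be verified is that for every remaining type the harmonic constituent takes values transverse to $\frak g_0^0$ (either in the central direction of $\frak g_0$ or in the second summand $\frak g_{-1}^*\otimes\frak g_{-2}^*\otimes\frak g_{-1}$), whence $K(\frak g_0^0)=\Cal R_{\frak g_0^0}$. I expect the bookkeeping of this weight computation, rather than any conceptual difficulty, to be the main obstacle, and the cleanest route is to extract it uniformly from the shape of the contact grading using the cohomology tables of Section 4.2 of \cite{book}.
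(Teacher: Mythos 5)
You should first be aware that the paper does not prove Lemma \ref{lem4.7} at all: it quotes the statement from Theorem 2.11 of \cite{Cahen-Schwachhoefer}, and merely remarks that the result ``can be deduced from Kostant's theorem in a way similar to our proof of Theorem \ref{thm4.3}'', explicitly declining to carry this out. Your proposal is precisely that suggested route, and its framing is sound: the identification of $K(\frak g_0^0)$ with $\ker(\partial_K)\cap(\La^2\frak g_{-1}^*\otimes\frak g_0^0)$ in degree two, homogeneity two, is correct (the component of $\partial_K R$ landing in $\La^2\frak g_{-1}^*\wedge\frak g_{-2}^*\otimes\frak g_{-2}$ does vanish because $\frak g_0^0$ kills $\frak g_{-2}$, and the remaining component is the Bianchi sum), and the computation showing that the $\La^2\frak g_{-1}^*\otimes\frak g_0$--part of $\partial_K\ph_A$ equals $R_A$ is also correct.

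The genuine gap is that the decisive step is announced rather than performed. The entire content of the lemma is the ``final case analysis'' you defer: verifying, for each contact grading not of type $A_n$ or $C_n$, that the degree--two, homogeneity--two harmonic space meets $\La^2\frak g_{-1}^*\otimes\frak g_0^0$ trivially. Without that verification (or a uniform argument replacing it) nothing has been proved, and this is exactly the computation the authors chose to avoid by citing \cite{Cahen-Schwachhoefer}. There is also a secondary point that needs more care than you give it: the Hodge decomposition $\ker(\partial_K)=\Im(\partial_K)\oplus\ker(\square)$ is a decomposition of $\frak g_0$--modules, but it does not respect the subspace $\La^2\frak g_{-1}^*\otimes\frak g_0^0$ summand by summand, so an element of $K(\frak g_0^0)$ splits into an exact and a harmonic part neither of which need individually take values in $\frak g_0^0$. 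Your claim that the exact contribution is ``accounted for precisely by the $\ph_A$'' therefore requires an argument analogous to the careful bookkeeping with $\al$, $\be$, $\ga$, $i$ in the proof of Theorem \ref{thm4.3} (now one homogeneity higher, where the map from $\frak g_2$ into $C^1$ and the cross--component $(Z,X)\mapsto[[Z,\ps],[A,X]]$ both enter); multiplicity--one of the harmonic constituents in $C^*(\frak g_-,\frak g)$ will help, but it does not by itself settle where those constituents sit relative to $\La^2\frak g_{-1}^*\otimes\frak g_0^0$. As it stands, the proposal is a plausible and well--oriented programme, not a proof.
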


Using this, we can now relate  special symplectic connections to
PCS--structures.  

\begin{thm}\label{thm4.7}
Let $\frak g_0^0$ be a special symplectic subalgebra corresponding to
a real simple Lie algebra $\frak g$, which is not of type $C_n$. Then
any special symplectic connection is the canonical connection
associated to a PCS--structure with vanishing harmonic torsion.

For PCS--structures of Grassmannian, quaternionic, and exceptional
type, the converse holds, i.e.~the canonical connection associated to
a torsion--free geometry of that type is a special symplectic
connection. Indeed, these connections are exactly those having special
symplectic holonomy. 

For PCS--structures of K\"ahler or para--K\"ahler type, there is an
additional condition on the curvature of the canonical connection
associated to a torsion--free PCS--structure that has to be satisfied
to obtain a special symplectic connection.
\end{thm}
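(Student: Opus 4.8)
The plan is to translate both the canonical connection of a torsion free PCS--structure and a given special symplectic connection into the common language of a torsion free symplectic connection, and then to compare the full space $K(\frak g_0^0)$ of formal curvatures with the distinguished subspace $\Cal R_{\frak g_0^0}$ by means of Lemma \ref{lem4.7}. First I would record the elementary reductions. A torsion free PCS--structure is by definition one whose canonical connection $\nabla$ satisfies $T_h=T_a=0$; by Corollary \ref{cor4.4}(iii) the induced connection on $\ell$ is then flat and locally admits a closed parallel section $\om$, which is a local symplectic form, so $\nabla$ is torsion free with $\nabla\om=0$. Being compatible with the $G_0$--structure, its curvature has values in $\frak g_0$, and $\nabla\om=0$ forces these into $\frak g_0\cap\frak{sp}(\frak g_{-1})=\frak g_0^0$ (Remark \ref{rem3.1}); torsion freeness supplies the first Bianchi identity, so the curvature is a section of the bundle induced by $K(\frak g_0^0)$. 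A special symplectic connection, on the other hand, is exactly a torsion free connection with $\nabla\om=0$ whose curvature lies in the subbundle induced by $\Cal R_{\frak g_0^0}\subset K(\frak g_0^0)$.

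With these reductions in place, the converse direction for Grassmannian, quaternionic and exceptional type is immediate, since these are precisely the types for which $\frak g$ is neither of type $A_n$ nor $C_n$: Lemma \ref{lem4.7} then gives $K(\frak g_0^0)=\Cal R_{\frak g_0^0}$, so the curvature of the canonical connection, which a priori only lies in $K(\frak g_0^0)$, automatically lies in $\Cal R_{\frak g_0^0}$ and $\nabla$ is special symplectic. For K\"ahler and para--K\"ahler type ($A_n$) the inclusion $\Cal R_{\frak g_0^0}\subset K(\frak g_0^0)$ is proper, and the additional hypothesis in the statement is exactly that the curvature of $\nabla$ be a section of $\Cal R_{\frak g_0^0}$ and not merely of $K(\frak g_0^0)$; by Proposition \ref{prop4.5} the canonical connection is here the Levi--Civita connection of a local (para--)K\"ahler metric, so this is a pointwise curvature normalization of Bochner type.

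For the forward direction I would construct the PCS--structure from a given special symplectic connection $\nabla$ on $(M,\om)$. The line $\ell$ spanned by $\om$ is a conformally symplectic structure preserved by $\nabla$, and since $\om$ is closed and non--degenerate $\ell$ is already conformally symplectic; the real work is to reduce the structure group from $CSp(\frak g_{-1})$ down to $G_0$. The crucial input is that a special symplectic connection has \emph{special symplectic holonomy}, i.e.\ restricted holonomy contained in $G_0^0$; this is the structural theorem of \cite{Cahen-Schwachhoefer}, and it is precisely here that the definition of $\Cal R_{\frak g_0^0}$ in terms of the contact grading enters. Granting it, the holonomy reduction produces a parallel $G_0^0$--subbundle of the frame bundle, hence a $G_0$--structure and thus a PACS--structure for which $\nabla$ is compatible. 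As $\nabla$ is torsion free, its torsion lies trivially in the normalization space $\ker(\square)\oplus(\La^3_0T^*M\otimes\ell^*)$, so by the uniqueness in Corollary \ref{cor4.4}(i) it must be the canonical connection; vanishing of the torsion then reads $T_a=0$ (the structure is PCS) and $T_h=0$ (vanishing harmonic torsion). Combining the two directions, for Grassmannian, quaternionic and exceptional type the canonical connections of torsion free PCS--structures coincide with the special symplectic connections, which by the holonomy statement are exactly the connections of special symplectic holonomy.

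I expect the main obstacle to be exactly this holonomy reduction in the forward direction: passing from the pointwise curvature condition $R\in\Cal R_{\frak g_0^0}$ to an honest reduction of the holonomy to $G_0^0$ (equivalently, showing that the $G_0^0$--frame singled out pointwise by a nonzero $R$ varies in a parallel fashion). This is the heart of the Cahen--Schwachh\"ofer theory, driven by the second Bianchi identity together with the isomorphism $\Cal R_{\frak g_0^0}\cong\frak g_0^0$, and I would invoke their result rather than reprove it. By contrast, everything in the converse and in the identification of $\nabla$ with the canonical connection is formal, resting only on Corollary \ref{cor4.4} and Lemma \ref{lem4.7}.
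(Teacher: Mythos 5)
Your proposal is correct and follows essentially the same route as the paper: the converse is handled identically via Corollary \ref{cor4.4}(iii) and Lemma \ref{lem4.7} (with the $A_n$ cases requiring vanishing of the curvature component in the complement of $\Cal R_{\frak g_0^0}$), and the forward direction rests on producing a reduction of structure group to a group with Lie algebra $\frak g_0^0$ and then using the uniqueness in Corollary \ref{cor4.4}(i). The only difference is the input for that reduction: the paper invokes the classical Ambrose--Singer theorem applied to the condition that the curvature lies in $K(\frak g_0^0)$, whereas you invoke the Cahen--Schwachh\"ofer holonomy theorem --- both defer, in essentially the same way, the pointwise-to-parallel issue you correctly identify as the heart of the matter.
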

\begin{proof}
  By the classical Ambrose--Singer theorem, a torsion--free
  connection with curvature contained in $K(\frak g_0^0)$ can be
  obtained from a reduction of structure group to a group with Lie
  algebra $\frak g_0^0$. Via the inclusion $\frak
  g_0^0\hookrightarrow\frak g_0$, one can extend this to a group $G_0$
  with Lie algebra $\frak g_0$, and the connection still is compatible
  with the corresponding reduction. Thus we have obtained a
  PACS--structure which admits a compatible torsion--free
  connection. Hence the structure has to be PCS with vanishing
  harmonic torsion, and the connection has to coincide with the
  canonical one.

For the converse, part (iii) of Corollary \ref{cor4.4} implies that the
curvature of the canonical connection of any PCS--structure has values
in $\frak g_0^0\subset\frak g_0$, so it lies in $K(\frak g_0^0)$. By
Lemma \ref{lem4.7}, we have $K(\frak g_0^0)=\Cal R_{\frak g_0^0}$ in the
cases of Grassmannian, quaternionic, and exceptional type. Thus in
these cases, the connection is special symplectic provided that it is
torsion--free. In the remaining cases, there is a natural $\frak
g_0^0$--invariant complement $\Cal W_{\frak g_0^0}$ to $\Cal R_{\frak
  g_0^0}\subset K(\frak g_0^0)$, and to obtain a special symplectic
connection, one has to require torsion--freeness and vanishing of the
curvature component in $\Cal W_{\frak g_0^0}$.
\end{proof}

As shown in \cite{Cahen-Schwachhoefer}, the additional curvature
condition in the K\"ahler and para--K\"ahler cases turns out to be
vanishing of the so--called Bochner--curvature. Thus, special
symplectic connections in these cases are Levi--Civita connections of
Boch\-ner--K\"ahler metrics respectively Bochner--bi--Langrangean
metrics rather than just of K\"ahler metrics respectively
para--K\"ahler metrics. We will discuss an alternative
characterization of special symplectic connections among the canonical
connections of PCS--structures of (para--)K\"ahler type using
contactifications in \cite{PCS2}.

\begin{bibdiv}
\begin{biblist}

\bib{Bryant}{article}{
   author={Bryant, Robert L.},
   title={Bochner--K\"ahler metrics},
   journal={J. Amer. Math. Soc.},
   volume={14},
   date={2001},
   number={3},
   pages={623--715},
   issn={0894-0347},
   review={\MR{1824987}},
   doi={10.1090/S0894-0347-01-00366-6},
}

\bib{Cahen-Schwachhoefer}{article}{
   author={Cahen, Michel},
   author={Schwachh{\"o}fer, Lorenz J.},
   title={Special symplectic connections},
   journal={J. Differential Geom.},
   volume={83},
   date={2009},
   number={2},
   pages={229--271},
   issn={0022-040X},
   review={\MR{2577468 (2011b:53045)}},
}

\bib{CD}{article}{
   author={Calderbank, David M. J.},
   author={Diemer, Tammo},
   title={Differential invariants and curved Bernstein-Gelfand-Gelfand
   sequences},
   journal={J. Reine Angew. Math.},
   volume={537},
   date={2001},
   pages={67--103},
   issn={0075-4102},
   review={\MR{1856258 (2002k:58048)}},
   doi={10.1515/crll.2001.059},
}

\bib{Cap-Salac}{article}{
   author={{\v{C}}ap, Andreas},
   author={Sala{\v{c}}, Tom{\'a}{\v{s}}},
   title={Pushing down the Rumin complex to conformally symplectic
   quotients},
   journal={Differential Geom. Appl.},
   volume={35},
   date={2014},
   number={suppl.},
   pages={255--265},
   issn={0926-2245},
   review={\MR{3254307}},
   doi={10.1016/j.difgeo.2014.05.004},
}

\bib{PCS2}{article}{
   author={{\v{C}}ap, Andreas},
   author={Sala\v c, Tom\'a\v s},
   title={Parabolic conformally symplectic structures II; parabolic
     contactification}, 
   eprint={arXiv:1605.01897}, 
}

\bib{PCS3}{article}{
   author={{\v{C}}ap, Andreas},
   author={Sala{\v{c}}, Tom{\'a}{\v{s}}},
   title={Parabolic conformally symplectic structures III; Invariant
     differential operators and complexes},
   eprint={arXiv:1701.01306}, 
}

\bib{book}{book}{
   author={{\v{C}}ap, Andreas},
   author={Slov{\'a}k, Jan},
   title={Parabolic geometries. I},
   series={Mathematical Surveys and Monographs},
   volume={154},
   note={Background and general theory},
   publisher={American Mathematical Society},
   place={Providence, RI},
   date={2009},
   pages={x+628},
   isbn={978-0-8218-2681-2},
   review={\MR{2532439 (2010j:53037)}},
}

\bib{CSS-BGG}{article}{
   author={{\v{C}}ap, Andreas},
   author={Slov{\'a}k, Jan},
   author={Sou{\v{c}}ek, Vladim{\'{\i}}r},
   title={Bernstein-Gelfand-Gelfand sequences},
   journal={Ann. of Math. (2)},
   volume={154},
   date={2001},
   number={1},
   pages={97--113},
   issn={0003-486X},
   review={\MR{1847589 (2002h:58034)}},
   doi={10.2307/3062111},
}

\bib{E-G}{article}{
   author={Eastwood, Michael},
   author={Goldschmidt, Hubert},
   title={Zero-energy fields on complex projective space},
   journal={J. Differential Geom.},
   volume={94},
   date={2013},
   number={1},
   pages={129--157},
   issn={0022-040X},
   review={\MR{3031862}},
}

\bib{E-S}{article}{
   author={Eastwood, Michael G.},
   author={Slov{\'a}k, Jan},
   title={Conformally Fedosov manifolds},
   eprint={arXiv:1210.5597},
}

\bib{G-H}{article}{
   author={Gray, Alfred},
   author={Hervella, Luis M.},
   title={The sixteen classes of almost Hermitian manifolds and their linear
   invariants},
   journal={Ann. Mat. Pura Appl. (4)},
   volume={123},
   date={1980},
   pages={35--58},
   issn={0003-4622},
   review={\MR{581924}},
   doi={10.1007/BF01796539},
}

\bib{Kostant}{article}{
   author={Kostant, Bertram},
   title={Lie algebra cohomology and the generalized Borel-Weil theorem},
   journal={Ann. of Math. (2)},
   volume={74},
   date={1961},
   pages={329--387},
   issn={0003-486X},
   review={\MR{0142696 (26 \#265)}},
}

\bib{M-S}{article}{
   author={Merkulov, Sergei},
   author={Schwachh{\"o}fer, Lorenz},
   title={Classification of irreducible holonomies of torsion-free affine
   connections},
   journal={Ann. of Math. (2)},
   volume={150},
   date={1999},
   number={1},
   pages={77--149},
   issn={0003-486X},
   review={\MR{1715321}},
   doi={10.2307/121098},
}

\bib{Sternberg}{book}{
   author={Sternberg, Shlomo},
   title={Lectures on differential geometry},
   publisher={Prentice-Hall, Inc., Englewood Cliffs, N.J.},
   date={1964},
   pages={xv+390},
   review={\MR{0193578 (33 \#1797)}},
}

\end{biblist}
\end{bibdiv}

\end{document}